\numberwithin{equation}{section}
\theoremstyle{plain}
\newtheorem{Th}{Theorem}[section]
\newtheorem{Lem}[Th]{Lemma}
\newtheorem{Prop}[Th]{Proposition}
\theoremstyle{definition}
\newtheorem{Rem}[Th]{Remark}
\newtheorem{?}[Th]{Problem}
\newcommand{\diam}{{\rm{diam}}}
\begin{document}

\title{Heegaard distance of the link complements in $S^3$}

\author{Xifeng Jin}

\address{Xifeng Jin\\
Department of Mathematics\\
University at Buffalo--SUNY\\
Buffalo, NY 14260-2900, USA\\
xifengji@buffalo.edu}

\date{\today}
\subjclass[2010]{Primary:  57M25. Secondary:  57M27.}
\keywords{curve complex, disk complex, Heegaard distance}

\begin{abstract} 
We show that, for any integers, $g \geq 3$ and $n \geq 2$, there exists
a link in $S^3$ such that its complement has a  genus $g$ Heegaard splitting with distance $n$. 
\end{abstract}
\maketitle

\section{\textbf{Introduction}}
The Heegaard distance (distance) was introduced by Hempel \cite{Hempel} to measure the complexity of 3-manifolds using Heegaard splittings, which generalizes the notion of Casson and Gordon's strong irreducibility of Heegaard splittings. Using this notion, Hempel showed that for any integers $g \geq 2$ and $n > 0$, there is a 3-manifold with Heegaard splitting of genus $g$ that admits the distance at least $n$, that is, there exist 3-manifolds with high distance. In line with this result, Evans \cite{Evans} and Yoshizawa \cite{Yo} used combinatorial techniques to construct 3-manifolds of high distance. Lustig and Moriah \cite{LM} introduced the fat train tracks to construct  3-manifolds of high distance. 

The topology of the underlying 3-manifold places constraints on the Heegaard distance. If a 3-manifold is Haken, Hartshorn \cite{Hart} showed that its Heegaard distance is bounded above by the double of the genus of an incompressible closed surface. However, Li \cite{Li1} proved that closed non-Haken manifolds admit high Heegaard distance. 

When restricting to the knot complements in $S^3$, Minsky, Moriah and Schleimer \cite{MMS} proved the existence of high distance knot, that is, a knot in $S^3$ whose exterior has a genus $g$ Heegaard splitting of arbitrarily high distance for any $g > 1$. In \cite{CR}, Campisi and Rathbun generalized the result to knots in 3-manifolds. 

On the other hand, the exactness of Heegaard distance of 3-manifolds has been studied in \cite{IJK, John, QZG}. Ido, Jang, Kobayashi \cite{IJK} showed that, for any integers $n \geq 2$ and $g \geq 2$, there exists a genus-$g$ Heegaard splitting of a closed 3-manifold with distance exactly $n$. Johnson \cite{John} proved that, for every pair of positive integers $d \geq 4$, $g \geq 2$ with $d$ even, there is a compact, connected, closed, orientable three-manifold $M$ with both a genus $g$ Heegaard surface $\Sigma$ such that $d(\Sigma)=d$, and a separating, two-sided, closed, embedded, incompressible surface of genus $\frac{1}{2}d$. Qiu, Zou, Guo \cite{QZG} showed that, for any integers $n \geq 1$ and $g \geq 2$, there is a closed 3-manifold $M^n_g$ which admits a distance-$n$ Heegaard splitting of  genus $g$ unless $(g,n) = (2,1)$. 
 
In the setting of bridge splitting of links in 3-manifolds, Ido, Jang, Kobayashi \cite{IJK2} was able to show that, for any integers $n \geq 2$, $g \geq 0$ and $b \geq 1$ except for $(g,b)=(0,1)$ and $(0,2)$, there exists a $(g,b)$-bridge splitting of a link in some 3-manifold with distance exactly $n$. 

Comparable to these results, we show the exactness of Heegaard distance for the link complements in $S^3$. Our result lies in the intersection of high distance knot in \cite{CR, MMS} and the exactness of Heegaard distance of 3-manifolds in \cite{IJK, IJK2, John, QZG}. The ambient 3-manifold is $S^3$ and the link complements in $S^3$ achieve the exact Heegaard distance. 

We start off with two compression bodies of genus $g \geq 3$, each of which is obtained from attaching one 2-handle to the $S \times [0,1]$ along a separating curve. The curve is a meridian of one handlebody in the complement of $S$ embedded in $S^3$ in a standard way. The union of two compression bodies along the common boundary surface $S$ can be embedded in $S^3$. It follows that the disk graph of each compression body contains the unique separating meridian. This gives us a good control of the geodesic realizing the distance between the compression bodies. To achieve the exact distance, we will adopt Ido, Jang and Kobayashi's approach to construct geodesics such that the two ends are meridians. 

\begin{Th}
\label{compression}
 Let $S$ be a closed oriented surface of genus $g \geq 3$ embedded in $S^3$. For any integer $n\geq 0$, there exists a compact oriented 3-manifold $V_0\cup_S W_0$ obtained by the union of two compression bodies $V_0$, $W_0$ with Heegaard distance $n$, and it can be embedded in $S^3$.   
\end{Th}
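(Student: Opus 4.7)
The plan is to build $V_0\cup_S W_0$ as a regular neighborhood of the standardly embedded $S\subset S^3$ together with one separating meridian disk on each side, and then to reduce the Heegaard distance to a curve complex distance between the two meridians.

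Standardly embed $S$ in $S^3$, obtaining the genus-$g$ Heegaard splitting $S^3=H_1\cup_S H_2$. Choose separating simple closed curves $\alpha,\beta$ on $S$ such that $\alpha$ bounds a separating disk $D_\alpha\subset H_1$ and $\beta$ bounds a separating disk $D_\beta\subset H_2$; the specific pair will be produced in the final step. Let $V_0$ be a regular neighborhood of $S\cup D_\alpha$ in $H_1$, and $W_0$ a regular neighborhood of $S\cup D_\beta$ in $H_2$. Each of $V_0,W_0$ is a compression body obtained from $S\times [0,1]$ by attaching a single $2$-handle along a separating curve, and $V_0\cup_S W_0\subset S^3$ by construction. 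Since the attaching curve is separating, $\alpha$ is the unique separating meridian of $V_0$ (any other disk arises from the cocore of the $2$-handle via disk slides and bounds a non-separating curve), and analogously $\beta$ is the unique separating meridian of $W_0$.

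The central task is then to choose $\alpha,\beta$ with $d_{\mathcal{C}(S)}(\alpha,\beta)=n$ and to upgrade this to $d(V_0,W_0)=n$. For the first, I adopt Ido--Jang--Kobayashi's construction: iteratively apply sufficiently high powers of pseudo-Anosov maps (or Dehn twists along filling curves) on carefully chosen subsurfaces of $S$ to produce a path $\alpha=\gamma_0,\gamma_1,\dots,\gamma_n=\beta$ of pairwise disjoint curves whose subsurface projections are so large that the Masur--Minsky bounded geodesic image theorem rules out any shortcut, yielding $d_{\mathcal{C}(S)}(\alpha,\beta)=n$. The terminal mapping class is arranged so that $\beta\in\mathcal{D}(H_2)$ is a separating meridian. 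To upgrade to the Heegaard distance, I invoke the same subsurface projection control: every non-separating meridian of $V_0$ or $W_0$ has subsurface projections close to those of $\alpha$ or $\beta$, so the bounded geodesic image theorem forces the minimum distance across the disk sets to remain at least $n$. The case $n=0$ is handled by taking $\alpha=\beta$ to be a common meridian, which exists because the standard genus-$g$ splitting of $S^3$ is stabilized, hence reducible.

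The main obstacle is the simultaneous control needed in the geodesic construction: one must synchronize the large subsurface projections that force exact distance $n$ with the algebraic constraint that the terminal curve land on a separating disk of $H_2$. Engineering this via the Ido--Jang--Kobayashi technique, while preserving the uniqueness of the separating meridian on each side, is the technical heart of the argument.
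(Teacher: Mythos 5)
Your overall architecture is the same as the paper's: take compression bodies $S[\alpha]$, $S[\beta]$ obtained by attaching a single $2$-handle to $S\times[0,1]$ along separating curves bounding disks in the two handlebodies of the standard splitting, so that the union embeds in $S^3$, and reduce the Heegaard distance to $d_{\mathcal{C}(S)}(\alpha,\beta)$. But two points need repair. First, your uniqueness statement is off: when the attaching curve is separating, the compression body $S[a]$ has a \emph{unique} meridian, namely $a$ itself (Biringer--Vlamis, Proposition \ref{one meridian} in the paper); there are no non-separating meridians at all. Your parenthetical claim that other disks exist and bound non-separating curves is false, and the subsequent "upgrade" step --- controlling hypothetical non-separating meridians of $V_0,W_0$ by subsurface projections and the Bounded Geodesic Image Theorem --- is both unnecessary and, as written, unsubstantiated: you give no argument that every such meridian projects close to $\alpha$ or $\beta$ in the relevant subsurfaces, and without the uniqueness fact the distance could a priori drop. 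With the correct fact, $d(V_0,W_0)=d_{\mathcal{C}(S)}(\alpha,\beta)$ holds on the nose and no projection argument is needed.

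Second, and more seriously, the step you defer as "the technical heart" is essentially the entire content of the theorem, and your proposed route for it does not match what the Ido--Jang--Kobayashi machinery actually delivers. You want a geodesic of length exactly $n$ whose two \emph{endpoints} are separating meridians of $H_1$ and $H_2$ respectively; but the IJK extension criteria (Propositions \ref{even} and \ref{odd}) require nonseparating curves and intersection/projection conditions that are arranged along the geodesic, and there is no indication of how to force the terminal vertex to be a separating curve bounding a disk in $H_2$ while preserving exactness. The paper avoids this: it builds geodesics $[\alpha_0,\dots,\alpha_n]$ through \emph{nonseparating} curves with $|\alpha_{i-2}\cap\alpha_i|=1$ and meridian conditions depending on $i \bmod 4$ (Lemma \ref{lem: geodesics}), then takes the separating meridians to be band sums $\beta=\partial N(\alpha_0\cup\alpha_2)$ and $\gamma=\partial N(\alpha_{n-2}\cup\alpha_n)$ sitting at distance $1$ from curves near the ends, and pins $d_{\mathcal{C}(S)}(\beta,\gamma)$ exactly by a two-sided triangle-inequality sandwich against the long geodesic (with four cases, giving distances $n-2$, $n-1$, $n$, $n+1$ for $4\mid n$, to cover all integers). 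In particular the exact lower bound comes from this sandwich, not from directly certifying that your chosen $\alpha,\beta$ are geodesic endpoints; without an argument of this kind (or some substitute), your construction of the exact-distance pair of separating meridians on opposite sides remains a genuine gap. (Minor: your path of "pairwise disjoint curves" should read consecutively disjoint, and the case $n=1$ should be mentioned, e.g.\ two disjoint separating meridians on opposite sides.)
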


The meridian realizing the distance divides the genus $g$ surface $S$ into a one-holed torus and a genus $g-1$ surface with one boundary component. Inspired by Minsky, Moriah and Schleimer's work \cite{MMS}, we can attach a genus $g-1$ handlebody that minuses a knot to push the disk graph far away except for the meridian that realizes the distance. 

\begin{Th}
\label{link}
For any integers $g\geq 3$ and $n \geq 2$, there exists a link in $S^3$ such that its complement has a genus $g$ Heegaard splitting with distance $n$. 
\end{Th}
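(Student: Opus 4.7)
The strategy is to extend the compression-body union $V_0\cup_S W_0$ of Theorem~\ref{compression} to a link complement in $S^3$ by filling in its inner boundaries with carefully chosen handlebody pieces, using the MMS technique to preserve the Heegaard distance. First I would identify the complement of $V_0\cup_S W_0$ in $S^3$. The Heegaard surface $S$ bounds two genus-$g$ handlebodies $\tilde V\supset V_0$ and $\tilde W\supset W_0$. Since the $2$-handle attaching curves defining $V_0$ and $W_0$ are separating meridians of $\tilde V$ and $\tilde W$, the separating disks in these handlebodies cut each complementary region $\tilde V\setminus V_0$ and $\tilde W\setminus W_0$ into a disjoint union of a solid torus (call them $U$ and $U'$) bounded by the torus component of the corresponding inner boundary, and a genus-$(g-1)$ handlebody (call them $H$ and $H'$) bounded by the genus-$(g-1)$ component.

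Next, following Minsky, Moriah, and Schleimer~\cite{MMS}, I would choose knots $K_V\subset H$ and $K_W\subset H'$ whose subsurface projections to the relevant boundary curve complexes are arranged (e.g.\ via high powers of a pseudo-Anosov map) to push the disk graphs of $H\setminus N(K_V)$ and $H'\setminus N(K_W)$ far from the disks of $W_0$ and $V_0$ respectively. Set
\[
V \;=\; V_0 \cup U \cup \bigl(H\setminus N(K_V)\bigr), \qquad W \;=\; W_0 \cup U' \cup \bigl(H'\setminus N(K_W)\bigr),
\]
possibly augmenting the link by the cores of $U$ and $U'$ (replacing those solid-torus fillings by $T\times I$) to eliminate the meridian disks of the solid tori, which would otherwise produce disks in $\mathcal{D}(V)$ and $\mathcal{D}(W)$ at curve complex distance $1$ from the separating meridians. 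Then $V$ and $W$ are compression bodies of genus $g$ whose inner boundaries are tori, and $V\cup_S W = S^3\setminus N(L)$ for the resulting link $L$.

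The upper bound $d(V,W)\le n$ is immediate from the inclusions $\mathcal{D}(V_0)\subset\mathcal{D}(V)$ and $\mathcal{D}(W_0)\subset\mathcal{D}(W)$ together with $d(V_0,W_0)=n$. The lower bound $d(V,W)\ge n$ is the crux of the proof and the main obstacle. One must show that every ``new'' disk in $\mathcal{D}(V)\setminus\mathcal{D}(V_0)$ coming from $H\setminus N(K_V)$ has boundary curve on $S$ at curve complex distance at least $n$ from every disk of $W$, and symmetrically. The MMS choice of knot together with the Masur--Minsky bounded geodesic image theorem provides the required subsurface projection estimates on the genus-$(g-1)$ piece of $S\setminus c$, forcing any geodesic in $\mathcal{C}(S)$ between these new disks and the disks on the opposite side to have length at least $n$. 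The technical heart of the argument is ensuring that these estimates apply uniformly to all new disks and that the resulting bound is sharp, yielding $d(V,W)=n$ rather than a strict inequality, thus completing the proof of Theorem~\ref{link}.
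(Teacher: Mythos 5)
Your construction is the same as the paper's: drill the cores of the two solid tori (so the torus inner boundaries $F_1$, $F_3$ survive as link components) and glue MMS-twisted, knot-drilled genus $g-1$ handlebodies along the other inner boundaries; your observation that filling the solid tori back in could create disks disjoint from the separating meridians and drop the distance is also correct. The problem is that the lower bound, which you yourself call the crux, is only asserted, and the appeal to ``MMS plus the bounded geodesic image theorem'' skips the steps that actually make it work. To apply Theorem \ref{Bounded} one must first show that every vertex of a hypothetical geodesic of length $k<n$ between a disk of $V$ and a disk of $W$ cuts the genus $g-1$ subsurface ($S_2$, resp.\ $S_4$); the paper gets this from the fact that $S[\partial D_2]$ (resp.\ $S[\partial D_1]$) has a \emph{unique} meridian (Proposition \ref{one meridian}), so the geodesic must terminate at $\partial D_2$, and a vertex missing the separating meridian would contradict $d_{\mathcal{C}(S)}(\partial D_1,\partial D_2)=n$. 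One then still needs the innermost-disk/outermost-arc argument showing that any new essential disk of $V$ not isotopic to $D_1$ projects to a meridian of the drilled handlebody piece, so that the BGIT bound $M$ contradicts the choice of the pseudo-Anosov. None of this is in your sketch.

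More seriously, the two sides cannot be handled symmetrically, ``far from the disks of $W_0$ and $V_0$ respectively,'' as you propose. After the first gluing the disk set of $V=V_0\cup_{F_2}\Phi(\widetilde{V_0})$ is infinite and of unbounded diameter in $\mathcal{C}(S)$, so choosing the second knot far only from the single disk $D_1$ of $V_0$ does not prevent a new disk of $W$ from lying close to a new disk of $V$. The paper therefore chooses the two pseudo-Anosov maps \emph{sequentially}, and for the second one it must know that $\pi_{S_4}(\mathcal{D}(V_{F_2},S))$ has uniformly bounded diameter; this is exactly where Theorem \ref{Bounded disk} (Li, Masur--Schleimer) enters, using that $\partial D_2$ is disk-busting (because $n\geq 2$) and that $V_{F_2}$ is not an $I$-bundle. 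Without this ingredient your hoped-for ``uniform estimates for all new disks'' have no justification, and this is the genuine gap in the proposal. (Your closing worry about sharpness is empty: once the lower bound $\geq n$ is established, the upper bound $\leq n$ you already noted gives equality.)
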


The paper is organized as follows. In Section \ref{section: Preliminaries}, some relevant definitions and results about curve complex, Heegaard splitting and Heegaard distance are given. 
In Section \ref{section: Construction of geodesics}, we construct geodesics in the curve complex using a method from \cite{IJK}, subject to some constraints. Then, we prove Theorem \ref{compression} in Section \ref{section: compression}. In Section \ref{section: link complement}, we prove the main result, Theorem \ref{link} using the result of high distance knot from \cite{MMS} and the argument of exact Heegaard distance from \cite{QZG}.

\section*{Acknowledgements}

The author would like to thank his advisor, Professor William W. Menasco, for many insightful discussions and his continuous support. The author would also like to thank Professor Joan Birman for providing helpful suggestions. Finally, the author thanks the anonymous referee for many helpful comments to improve the readability of the paper.

\section{\textbf{Preliminaries}}
\label{section: Preliminaries}

\subsection{Curve complex}
Let $S=S_{g,b}$ be a compact oriented surface of genus $g$ and $b$ boundary components. The \emph{complexity} of surface $S$ is defined as $\xi(X)=3g+b-3$. A simple closed 
curve in $S$ is \emph{essential} if it does not bound a disk or an annulus in $S$. 

The \emph{curve complex}, denoted by $\mathcal{C}(S)$, is a simplicial complex such that 
each vertex is represented by the isotopy class of an essential simple closed 
curve, and $n+1$ vertices form an $n$-simplex of $\mathcal{C}(S)$ if their 
representatives can be realized disjointly. It was introduced by Harvey \cite{Harvey} to study the mapping 
class group. The information of curve complex is encoded in its 1-skeleton, which is called \emph{curve graph}. Throughout, we only 
consider the curve graph instead of curve complex, and we use the same 
notation $\mathcal{C}(S)$ for the curve graph. 

The \emph{arc and curve complex} $\mathcal{AC}(S)$ of 
a compact oriented surface $S$ with boundary can be defined similarly. Each 
vertex of $\mathcal{AC}(S)$ is the isotopy class of an essential properly 
embedded arc (each endpoint is allowed to move freely in its boundary) or an essential simple closed curve on $S$, and $n+1$ vertices 
form an $n$-simplex of 
$\mathcal{AC}(S)$ if their representatives can be realized disjointly. The 
notation $\mathcal{AC}(S)$ will also be used as the 1-skeleton of the arc and 
curve complex, and it is called the \emph{arc and curve graph}. 

For any two vertices $x$ and $y$ in $\mathcal{C}(S)$, the \emph{distance} $d_{\mathcal{C}(S)}(x,y)$ 
is the minimal number of edges in $\mathcal{C}(S)$ joining $x$ and $y$. For any 
two subsets $A$ and $B$, define 
$d_{\mathcal{C}(S)}(A,B)=\text{min}\{d_{\mathcal{C}(S)}(x,y)|x\in A,y\in B\}$. A 
\emph{geodesic} in the curve graph $\mathcal{C}(S)$ is a sequence of vertices 
$\Gamma=(\gamma_i)$ such that $d_{\mathcal{C}(S)}(\gamma_i,\gamma_j)=|i-j|$ for 
all $i,j$. These notions can be defined on the arc and curve graph $\mathcal{AC}(S)$ similarly. 
A metric space is \emph{$\delta$-hyperbolic}, 
if for each geodesic triangle in the metric space, each side lies in the 
$\delta$-neighborhood of the other two sides. 

\begin{Th} \emph{(Masur-Minsky \cite{MM1} Theorem 1.1)}\label{infinite diameter}
Let $S$ be a compact oriented surface with $\xi(S) \geq 1$, the 
curve graph $\mathcal{C}(S)$ is a $\delta$-hyperbolic metric space with infinite 
diameter for some $\delta$, where $\delta$ depends on the surface.   
\end{Th}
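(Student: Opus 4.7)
The plan is to establish the two assertions separately: infinite diameter and $\delta$-hyperbolicity.

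For infinite diameter, I would use the dynamics of pseudo-Anosov mapping classes. Since $\xi(S)\geq 1$, the mapping class group $\mathrm{Mod}(S)$ contains pseudo-Anosov elements $\phi$, each preserving a pair of filling measured laminations $\mu^{\pm}$ on $S$. For any essential simple closed curve $\alpha$, the iterates $\phi^n(\alpha)$ converge projectively to $\mu^{+}$ in $\mathcal{PML}(S)$ as $n\to +\infty$ and to $\mu^{-}$ as $n\to -\infty$. If the orbit $\{\phi^n(\alpha)\}$ were bounded in $\mathcal{C}(S)$, only finitely many isotopy classes of curves would appear along it, so a fixed curve $\gamma_0$ would have bounded geometric intersection number with infinitely many $\phi^n(\alpha)$; but that is incompatible with convergence to a filling lamination, whose intersection pairing with $\gamma_0$ forces the intersection numbers to grow. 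Hence $\mathrm{diam}\,\mathcal{C}(S)=\infty$.

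For hyperbolicity, I would follow the Masur--Minsky strategy via Teichm\"uller geodesics. Given two curves $\alpha,\beta$, realize them as short curves at the endpoints of a Teichm\"uller geodesic $\mathcal{G}\subset \mathcal{T}(S)$, and define a coarse projection $\pi\colon \mathcal{G}\to \mathcal{C}(S)$ by sending $X\in\mathcal{G}$ to an essential simple closed curve of hyperbolic length at most the Bers constant on $X$. The decisive step is to prove that $\pi(\mathcal{G})$ is an unparameterized quasi-geodesic in $\mathcal{C}(S)$, with constants depending only on $\xi(S)$. Once this is in hand, I would invoke a standard hyperbolicity criterion: a geodesic metric space equipped with a family of connecting paths between every pair of points whose triangles are uniformly slim is Gromov hyperbolic. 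Applying this to the family of curve-graph paths obtained by concatenating successive projections yields a $\delta=\delta(S)$ for which triangles in $\mathcal{C}(S)$ are $\delta$-thin.

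The main obstacle is precisely the quasi-geodesic projection statement, which is the technical heart of the Masur--Minsky argument. It rests on Minsky's product regions theorem to locate the subsurfaces of $S$ in which curves become short along $\mathcal{G}$, together with a contraction estimate on $\pi$ that prevents $\pi(\mathcal{G})$ from making inefficient detours through thin parts of Teichm\"uller space. A more combinatorial alternative is the Hensel--Przytycki--Webb approach: construct explicit unicorn arcs between properly embedded arcs and verify a $1$-slim triangle property directly in $\mathcal{AC}(S)$, then transport hyperbolicity to $\mathcal{C}(S)$ through the quasi-isometry between the two graphs when $\partial S\neq\emptyset$, reducing the closed case to the punctured case by puncturing at a point and comparing the curve graphs. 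Either route produces the constant $\delta$ depending on the topology of $S$ required by the statement.
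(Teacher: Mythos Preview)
The paper does not prove this theorem: it is stated as a quotation of Masur--Minsky \cite{MM1}, Theorem~1.1, and used throughout as a black box (notably to guarantee the existence of curves far from a given set in the proof of Lemma~\ref{lem: geodesics}). There is therefore no proof in the paper to compare your proposal against.

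That said, your sketch is a fair high-level outline of the original Masur--Minsky argument: pseudo-Anosov dynamics for infinite diameter, and the coarse projection of Teichm\"uller geodesics to $\mathcal{C}(S)$ combined with a thin-triangles criterion for hyperbolicity. You correctly identify the quasi-geodesic projection/contraction estimate as the technical core, and your alternative via unicorn paths (Hensel--Przytycki--Webb) is also a legitimate route, though the reduction of the closed case to the bounded case by ``puncturing and comparing'' is more delicate than you suggest and is not how those authors handle closed surfaces. For the purposes of this paper none of this is needed: the result is simply invoked.
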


\subsection{Disk graph}
Let $M$ be a compact oriented 3-manifold, and suppose the closed 
oriented surface $S_g$ is a boundary component of $M$. Denote  
$$\mathcal{D}(M,S_g)=\{ [\partial D]: (D,\partial D)\subset (M,S_g)\ \text{is an 
essential disk}\} \subset \mathcal{C}(S_g),$$ where $[*]$ denotes the isotopy class of a simple closed curve $*$ in the surface $S_g$. 
As a full subgraph of $\mathcal{C}(S_g)$, $\mathcal{D}(M,S_g)$ is called the \emph{disk graph}. It is the 1-skeleton of disk complex introduced by McCullough \cite{MC}.

\subsection{Heegaard splitting}
A \emph{handlebody $V$ of genus $g$} is a 3-manifold homeomorphic to a regular neighborhood 
of a connected graph in $S^3$, whose boundary is a closed oriented surface of genus $g$. A \emph{Heegaard splitting} of genus $g$ for a 
closed 3-manifold $M$ is $M = V \cup_{S} W$, where $V$ and $W$ are two handlebodies 
of genus $g$. The common boundary $S = \partial V= \partial W$ is a closed 
oriented surface of genus $g$ in $M$, which is called the \emph{Heegaard 
surface} of the Heegaard splitting. 

A \emph{compression body} $V$ is a compact oriented 3-manifold obtained from 
$S\times [0,1]$ and a 0-handle $B$ by attaching 1-handles to $S\times \{0\}\cup \partial B$, where $S$ is a closed oriented
surface, but it is not necessarily connected or nonempty. The \emph{negative \emph{(}inner\emph{)} boundary}  is $S\times\{1\}\subset V$ and the \emph{positive \emph{(}outer\emph{)} boundary} $\partial _+V=\partial V-\partial_{-}V$. By convention, a handlebody is a compression body with $\partial_{-}V=\varnothing$. 

Dually, a \emph{compression body} $V$ is obtained by attaching some 2-handles to 
$S\times \{1\}$ as the boundary of $S\times [0,1]$, and 3-handles to cap off any 
2-spheres created by the attachment of the 2-handles. The \emph{positive \emph{(}outer\emph{)} boundary} $\partial_+V$ is the boundary component $S\times \{0\}$.  
The \emph{negative \emph{(}inner\emph{)} boundary} $\partial_{-}V=\partial V-\partial_{+}V$. If $\partial_{-}V=\varnothing$, then it is a handlebody. 

For a compact oriented 3-manifold $M$ with boundary, the \emph{Heegaard 
splitting} of genus $g$ is $M=V\cup_S W$, where $V$ and $W$ are compression 
bodies. $S=\partial_+V=\partial_+W$ is a closed oriented surface of 
genus $g$ embedded in $M$.

A Heegaard splitting $M=V\cup_{S}W$ is called \emph{reducible} if there exists a 
pair of essential embedded disks $(D,D')\subset (V,W)$ with $\partial 
D=\partial D'\subset S$. Otherwise, it is $\emph{irreducible}$. A 
Heegaard splitting is called \emph{weakly reducible} if there exists a pair of 
essential embedded disks $(D,D')\subset (V,W)$ with $\partial D\cap 
\partial D'=\varnothing$. Otherwise, it is \emph{strongly irreducible}. 
The \emph{weak reducibility} and \emph{strong reducibility} were introduced 
by Casson and Gordon \cite{CG}.

\subsection{Heegaard distance}
An essential curve on $\partial_+ V$ is a \emph{meridian} of the compression body 
$V$ if it bounds an essential disk in $V$. If the closed oriented surface 
$S=\partial_+V$, then the subcomplex $\mathcal{D}(V,S)$ of $\mathcal{C}(S)$ 
spanned by the meridians is the disk graph of the compression body. In the 
case of compression body and handlebody, the disk graph is denoted by $\mathcal{D}(V)$ instead of 
$\mathcal{D}(V,S)$ for simplicity.

For a Heegaard splitting $M = V \cup_S W$ with the Heegaard surface $S$, Hempel \cite{Hempel} defined the 
\emph{Heegaard distance} (\emph{distance}) of a Heegaard splitting to be
$$d_{\mathcal{C}(S)}(V,W): =d_{\mathcal{C}(S)}(\mathcal{D}(V),\mathcal{D}(W))$$
The Heegaard splitting $M=V\cup_S W$ is \emph{reducible} if and only if 
$d_{\mathcal{C}(S)}(V,W)=0$, and it is \emph{weakly reducible} if and only if 
$d_{\mathcal{C}(S)}(V,W)\leq 1$.

\subsection{Subsurface projection}

A subsurface $X$ is an \emph{essential subsurface} of $S$, 
if $X$ is a compact connected oriented proper subsurface such that each component of $\partial X - \partial S$ is essential in $S$. Suppose $\xi(X) \geq 1$, we define a map $\pi_A:\mathcal{C}_0(S)\rightarrow 
\mathcal{P}(\mathcal{AC}_0(X))$, where $\mathcal{P}(\mathcal{AC}_0(X))$ is the power 
set of the vertices $\mathcal{AC}_0(X)$. Take any $\alpha$ in 
$\mathcal{C}_0(S)$, then consider the representative of $\alpha$ such that it 
intersects $X$ minimally, $\pi_A(\alpha)$ is the set of all isotopy classes of 
$\alpha \cap X$ relative to the boundary of $X$. Note that $\pi_A(\alpha)$ is empty when $\alpha$ can be 
realized disjointly from $X$. We say $\alpha$ \emph{cuts} $X$ if $\alpha \cap 
X \neq \varnothing$, and $\alpha$ \emph{misses} $X$ if $\alpha \cap X = \varnothing$.

There is a natural way to send them back to the curves in $\mathcal{C}_0(X)$. We can define $\pi_0: \mathcal{P}(\mathcal{AC}_0(X))\rightarrow \mathcal{C}_0(X)$ as 
follows. If $\alpha$ is in $X$, then $\pi_A(\alpha) = \alpha$ in $\mathcal{AC}_0(X)$ and $\pi_0( \pi_A(\alpha)) = \alpha$ in $\mathcal{C}_0(X)$. Otherwise, 
$\pi_A(\alpha)=\{\alpha_1,\alpha_2,\cdots,\alpha_n\}$ is a collection of isotopy 
classes of essential properly embedded arcs in $X$. The set 
$\pi_0(\pi_A(\alpha))$ is the isotopy classes of the essential components of 
$\partial N(\alpha_i\cup \partial X)$ in $X$, where
$N(\alpha_i\cup X)$ is a regular neighborhood of $\alpha_i\cup \partial X$ in 
$X$. The composition 
$\pi_0\circ \pi_A = \pi_X: \mathcal{C}_0(S) \rightarrow \mathcal{C}_0(X)$ is called the \emph{subsurface projection}.

\begin{Lem}\emph{(Masur-Minsky \cite{MM2} Lemma 2.2)}
Suppose that the complexity $\xi(X)>1$, and $d_{\mathcal{AC}(X)}(\alpha,\beta)\leq 1$ for $\alpha, \beta \in \mathcal{AC}(X)$, then
$d_{\mathcal{C}(X)}(\pi_0(\alpha),\pi_0(\beta))\leq 2$.  
\end{Lem}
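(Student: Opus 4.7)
The plan is to split into cases based on whether each of $\alpha$ and $\beta$ is an arc or a simple closed curve and, in each case, to exhibit a single vertex $\gamma\in\mathcal{C}(X)$ disjoint from both $\pi_0(\alpha)$ and $\pi_0(\beta)$, so that the edge-path $\pi_0(\alpha)\to\gamma\to\pi_0(\beta)$ realizes $d_{\mathcal{C}(X)}(\pi_0(\alpha),\pi_0(\beta))\le 2$. The cases where at least one of $\alpha,\beta$ is a curve are immediate; the content of the lemma is in the arc--arc case.

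When both $\alpha$ and $\beta$ are simple closed curves, $\pi_0$ is the identity on them and the hypothesis $d_{\mathcal{AC}(X)}(\alpha,\beta)\le 1$ directly gives $d_{\mathcal{C}(X)}(\alpha,\beta)\le 1$. When exactly one, say $\alpha$, is a curve and $\beta$ is an arc, the essential curve $\alpha$ may be isotoped into $\mathrm{int}(X)$ so that it is disjoint from $\beta$; then $\alpha$ misses a thin regular neighborhood $N(\beta\cup\partial X)$, and in particular misses every component of $\pi_0(\beta)\subset\partial N(\beta\cup\partial X)$, giving $d_{\mathcal{C}(X)}(\pi_0(\alpha),\pi_0(\beta))\le 1$.

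The main case is when $\alpha$ and $\beta$ are both essential arcs, realized disjointly (which forces $\partial X\neq\varnothing$). Set $\Sigma=N(\alpha\cup\beta\cup\partial X)$. Viewing $\alpha\cup\beta\cup\partial X$ as a graph (circles of total Euler characteristic $0$, plus two edges each attached at both endpoints) yields $\chi(\Sigma)=-2$, and hence $\chi(X\setminus\Sigma)=\chi(X)+2=4-2g-b$. The hypothesis $\xi(X)=3g+b-3\ge 2$ combined with $b\ge 1$ gives $\chi(X\setminus\Sigma)\le 0$, so $X\setminus\Sigma$ has at least one non-disk component $Y$. I would then argue that some boundary curve $\gamma$ of $Y$ is essential and non-peripheral in $X$: otherwise every boundary curve of $Y$ bounds either a disk in $\Sigma$ or cobounds a $\partial X$-parallel annulus in $\Sigma$, and attaching all such disks and annuli to $Y$ yields a subsurface $Y'\subset X$ with $\partial Y'\subset\partial X$; an Euler-characteristic count relating $\chi(Y')$, $\chi(Y)$, and $\chi(\Sigma)$ forces the number of trivial components of $\partial Y$ to be negative, a contradiction. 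Finally, because $N(\alpha\cup\partial X)$ and $N(\beta\cup\partial X)$ lie in $\Sigma$ while $\gamma\subset\partial\Sigma$, the curve $\gamma$ is disjoint from both $\pi_0(\alpha)$ and $\pi_0(\beta)$, delivering the required path in $\mathcal{C}(X)$.

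The main obstacle is the step where a non-disk complementary component is upgraded to one with an essential boundary curve: the Euler-characteristic input is soft and cheaply produces a non-disk component, but one must handle trivial and peripheral boundary curves of $Y$ simultaneously and use the hypothesis $\xi(X)>1$ to rule out the degenerate case in which $\Sigma$ essentially fills $X$ up to disks and boundary-parallel annuli. This is precisely where the bound fails for $\xi(X)=1$, where $\mathcal{C}(X)$ is redefined as a Farey graph anyway.
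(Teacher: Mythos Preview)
The paper does not prove this lemma; it is quoted verbatim as Lemma 2.2 of Masur--Minsky \cite{MM2} and used as a black box. So there is no in-paper proof to compare against; what follows is an assessment of your argument on its own merits (and against the original Masur--Minsky argument, which proceeds along the same lines).

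Your case division and the treatment of the curve--curve and curve--arc cases are correct and essentially forced. In the arc--arc case the strategy---produce an essential curve $\gamma$ in the frontier of $\Sigma=N(\alpha\cup\beta\cup\partial X)$---is the standard one, and your Euler-characteristic computation $\chi(\overline{X\setminus\Sigma})=\chi(X)+2\le 0$ (under $\xi(X)\ge 2$, $b\ge 1$) is right and does yield a non-disk complementary component $Y$.

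The step that needs tightening is exactly the one you flag. Your sketch asserts that the disks and peripheral annuli bounded by inessential components of $\partial Y$ lie in $\Sigma$, and then invokes an unspecified Euler-characteristic count to reach a contradiction. The first assertion is not correct as stated: every component of $\Sigma$ meets $\partial X$ (each component of the graph $\alpha\cup\beta\cup\partial X$ contains a boundary circle), so a disk in $X$ bounded by some $c\subset\partial Y$ cannot lie on the $\Sigma$-side---it must lie on the $Y$-side and hence contain $Y$. This actually makes the argument \emph{easier}, not harder: if two distinct boundary curves $c_1,c_2$ of $Y$ each bound disks $D_1,D_2$ on the $Y$-side, then $Y\subset D_1\cap D_2$ forces $D_1=D_2$ and $c_1=c_2$, a contradiction; so at most one boundary curve of $Y$ bounds a disk, and then $Y$ being non-disk forces some other $c_i$ to be peripheral. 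A short separate argument (again using that the peripheral annulus must lie on the side containing $\partial X$, hence the $\Sigma$-side) then finishes. In the boundary case $2g+b=4$ (i.e.\ $g=1$, $b=2$), where $\chi(\overline{X\setminus\Sigma})=0$ and the complement could a priori consist only of annuli, you should check that the core of such an annulus is essential in $X$; this is where the hypothesis $\xi(X)>1$ genuinely bites. So the approach is sound and completable, but the particular Euler-characteristic bookkeeping you gesture at (``forces the number of trivial components of $\partial Y$ to be negative'') is not the right mechanism; the contradiction comes from the sidedness of the disks and annuli rather than from a raw count.
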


 The notation $d_X(A)$ is the diameter of $\pi_X(A)$ in $\mathcal{C}(X)$ for a subset $A \subset \mathcal{C}_0(S)$ and $d_Y(A,B)$ is the diameter of $\pi_X(A) \cup \pi_X(B)$ in $\mathcal{C}(X)$ for two subsets $A$ and $B$ in $\mathcal{C}_0(S)$. The subsurface projection $\pi_X$ has a strong contraction property. 

\begin{Th}\emph{(Bounded Geodesic Image Theorem, Masur-Minsky \cite{MM2}, Theorem 
3.1)}\label{Bounded}
 Let $X$ be an essential subsurface of $S$ with $\xi(X)\neq 0$ and let 
$\Gamma=(\gamma_i)_{i\in I}$ be a geodesic in $\mathcal{C}(S)$. If each 
$\gamma_i$ cuts $X$, then there is a constant $M$ depending only on the surface 
so that $d_{\mathcal{C}(X)}(\pi_X(\Gamma))\leq M$. 
\end{Th}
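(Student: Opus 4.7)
The plan is to upgrade the compression-body pair $V_0 \cup_S W_0 \hookrightarrow S^3$ from Theorem \ref{compression} to a Heegaard splitting of a link complement in $S^3$, preserving the distance $n$. Recall that each of $V_0, W_0$ is built from $S \times [0,1]$ by attaching a single 2-handle along a separating meridian, $\sigma_V$ respectively $\sigma_W$, of the standard handlebody in $S^3 = H_V^{\mathrm{std}} \cup_S H_W^{\mathrm{std}}$; these curves are the only meridians of $V_0, W_0$, and they realize $d_{\mathcal{C}(S)}(\sigma_V, \sigma_W) = n$. The separating meridian $\sigma_V$ cuts $S$ into a one-holed torus $T_1^V$ and a one-holed genus-$(g-1)$ surface $\Sigma_{g-1,1}^V$, so that $\partial_- V_0 = T_V^2 \sqcup \Sigma_{g-1}^V$; correspondingly the $V$-side of $S^3 \setminus (V_0 \cup W_0)$ splits as the union of a solid torus $J_V$ and a genus-$(g-1)$ handlebody $H^V$. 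The analogous decomposition holds on the $W$-side.

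Following Minsky-Moriah-Schleimer, I would choose knots $K_V \subset H^V$ and $K_W \subset H^W$, realized as cores of the ambient handlebodies twisted by sufficiently high powers of suitable pseudo-Anosov maps of $\Sigma_{g-1}^V$ and $\Sigma_{g-1}^W$, so that: (i) each $X_V = H^V \setminus N(K_V)$ remains a compression body with $\partial_+ = \Sigma_{g-1}^V$ and $\partial_- = T^2_{K_V}$, and symmetrically for $X_W$; and (ii) the subsurface projection of the disk graph $\mathcal{D}(X_V)$ into $\mathcal{C}(\Sigma_{g-1,1}^V)$ lies farther than the Bounded Geodesic Image constant $M$ of Theorem \ref{Bounded} from the projection of $\mathcal{D}(W)$, and symmetrically on the $W$ side. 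Then with $L = j_V \cup j_W \cup K_V \cup K_W$, where $j_V, j_W$ are the cores of $J_V, J_W$, the complement $S^3 \setminus N(L) = V \cup_S W$ admits a genus-$g$ Heegaard splitting with $V = V_0 \cup (J_V \setminus N(j_V)) \cup X_V$ and $W$ symmetric, each a compression body with positive boundary $S$ and two torus negative boundary components corresponding to the components of $L$.

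For the distance, $d_{\mathcal{C}(S)}(V, W) \leq n$ is immediate since $\sigma_V \in \mathcal{D}(V_0) \subset \mathcal{D}(V)$, $\sigma_W \in \mathcal{D}(W_0) \subset \mathcal{D}(W)$, and $d(\sigma_V, \sigma_W) = n$. For the lower bound, observe that every meridian of $V$ other than $\sigma_V$ arises as the lift through the product region of $V_0$ of a meridian of $X_V$ on $\Sigma_{g-1}^V$; it therefore lies in the subsurface $\Sigma_{g-1,1}^V \subset S$ and is disjoint from $\sigma_V$, so at distance $1$ from $\sigma_V$ in $\mathcal{C}(S)$. Symmetrically on the $W$ side. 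Given any geodesic $\gamma_0, \ldots, \gamma_k$ in $\mathcal{C}(S)$ from $\mathcal{D}(V)$ to $\mathcal{D}(W)$, if $\gamma_0 \neq \sigma_V$ then condition (ii) together with Theorem \ref{Bounded} forces some $\gamma_i$ to miss $\Sigma_{g-1,1}^V$ and hence lie in $T_1^V$; either $\gamma_i = \sigma_V$, in which case the geodesic factors through $\sigma_V$ with $k = d(\gamma_0, \sigma_V) + d(\sigma_V, \gamma_k) \geq 1 + (n-1) = n$, or $\gamma_i \neq \sigma_V$, in which case $\gamma_i$ is disjoint from $\gamma_0$ in $S$, forcing $i = 1$, and the triangle inequality $d(\gamma_1, \gamma_k) \geq d(\sigma_V, \sigma_W) - d(\sigma_V, \gamma_1) - d(\sigma_W, \gamma_k)$ together with a symmetric application of (ii) on the $W$-side again yields $k \geq n$. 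Thus $d_{\mathcal{C}(S)}(V, W) = n$.

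The principal difficulty will be realizing the MMS-twisted knots $K_V, K_W$ inside the specific handlebodies $H^V, H^W$ sitting in $S^3$ so that both (i) and (ii) hold simultaneously while respecting the ambient embedding fixed by Theorem \ref{compression}. This is where the geodesic-construction techniques of Section \ref{section: Construction of geodesics}, which follow the method of \cite{IJK}, combine with the high-distance-knot construction of \cite{MMS} and the exact-distance argument of \cite{QZG} to produce a knot whose disk-graph contribution is concentrated near $\sigma_V$ in $\mathcal{C}(S)$ but subsurface-far from the $W$-side in $\mathcal{C}(\Sigma_{g-1,1}^V)$.
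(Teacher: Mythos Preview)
Your proposal does not address the stated theorem at all. The statement you were given is the Bounded Geodesic Image Theorem of Masur--Minsky, which in this paper is a \emph{cited} background result (Theorem~\ref{Bounded}); the paper offers no proof of it, and your write-up makes no attempt to bound the diameter of a subsurface projection of a geodesic. What you have written is instead a proof sketch for Theorem~\ref{link}, the paper's main result on link complements. As a proof of the stated theorem it is simply off-target.

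If your intended target was Theorem~\ref{link}, then your overall strategy matches the paper's: start from the compression-body pair of Theorem~\ref{compression}, glue in MMS-type high-distance knot exteriors along the genus-$(g-1)$ inner boundaries on each side, keep the solid-torus cores as additional link components, and use the Bounded Geodesic Image Theorem to show the distance does not drop. However, your lower-bound argument contains a real gap. You assert that ``every meridian of $V$ other than $\sigma_V$ arises as the lift through the product region of $V_0$ of a meridian of $X_V$'' and therefore \emph{lies in} the subsurface $\Sigma_{g-1,1}^V$. This is false: an essential disk $D$ in $V$ can have $\partial D$ crossing $\sigma_V$ many times. The paper handles this in Claim~1 by an outermost-arc argument on $D\cap D_1$: an outermost subdisk of $D$ sits in the glued-in knot exterior, so some curve in the \emph{subsurface projection} $\pi_{\Sigma_{g-1,1}^V}(\partial D)$ bounds an essential disk there---not $\partial D$ itself. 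Without this, your subsequent case analysis (e.g.\ ``$\gamma_i$ is disjoint from $\gamma_0$, forcing $i=1$'') does not go through, because $\gamma_0=\partial D$ need not be contained in $\Sigma_{g-1,1}^V$. The paper also needs Theorem~\ref{Bounded disk} (the Li/Masur--Schleimer bound on disk-complex projections) when passing to the $W$-side, since after the first gluing $V_{F_2}$ has an infinite disk set rather than a single meridian; your condition~(ii) on the $W$-side implicitly assumes a bound on $\pi_{\Sigma_{g-1,1}^W}(\mathcal{D}(V))$ that requires justification.
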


The constant $M$ can be taken to be independent of the surface 
\cite{Webb}.

Let $F$ be a boundary component of a compact orientable 3-manifold $M$.
A simple closed curve $\gamma$ on an oriented closed surface $F$ is \emph{disk-busting} if it intersects every simple closed curve in the disk graph $\mathcal{D}(M, F)$, that is, $F - \gamma$ is incompressible in $M$. If $M$ is an $I$-bundle over a compact surface $S$ with boundary $\partial S$, then the boundary  $\partial M$ can be decomposed into two parts, vertical boundary and horizontal boundary. The vertical boundary $\partial_v M$ is the $I$-bundle restricted to the $\partial S$. The horizontal boundary $\partial_h M = \overline{\partial M - \partial_v M}$ is the portion of $\partial M$ that is transverse to the $I$-fibers. 

The following theorem of the subsurface projection of disk complex was proved independently by Li \cite{Li2}, Masur and Schleimer \cite{MS}. 
\begin{Th}\emph{(Li \cite{Li2} Theorem 1, Masur-Schleimer \cite{MS} Theorem 12.1)}\label{Bounded disk}
Let $M$ be a compact orientable and irreducible 3-manifold and $F$  a component of $\partial M$. Suppose $\partial M - F$ is incompressible in $M$. Let $\mathcal{D}$ be the disk complex for $F$. Let $S$ be a compact connected subsurface of $F$ and suppose that every component of $\partial S$ is disk-busting. Then either
\begin{enumerate}
\item $M$ is an $I$-bundle over a compact surface, and $S$ is a component of the horizontal boundary of this $I$-bundle, and the vertical boundary of this $I$-bundle is a single annulus, or
\item the image $\pi_A(\mathcal{D})$ of the disk complex lies in a ball of radius 3 in $\mathcal{AC}(S)$, in particular, $\pi_A(\mathcal{D})$ has diameter at most 6 in $\mathcal{AC}(S)$. Moreover, $\pi_S(\mathcal{D})$ has diameter at most 12 in $\mathcal{C}(S)$.
\end{enumerate}
\end{Th}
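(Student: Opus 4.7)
The plan is to prove the contrapositive: if $\pi_A(\mathcal{D})$ has diameter greater than $6$ in $\mathcal{AC}(S)$, then $M$ must be the $I$-bundle described in alternative (1). The heart of the argument is an interplay between iterated disk-surgery and a rigidity step that promotes many disjoint compatible disks into a genuine product structure on $M$.

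First I set up the surgery machinery. Let $D, D' \subset M$ be two essential disks, isotoped to minimal position. Since $M$ is irreducible, an innermost-disk argument eliminates circle intersections so that $D \cap D'$ consists of arcs. Pick an arc $\alpha$ outermost in $D'$; it cuts off a subdisk $E \subset D'$ whose boundary is $\alpha$ together with a sub-arc of $\partial D' \subset F$. Band-summing $D$ with $E$ along $\alpha$ produces two disks $D^{(1)}, D^{(2)}$, each with boundary on $F$ disjoint from $\partial D$; at least one of them, say $D_1$, is essential (otherwise minimality of $|D \cap D'|$ fails). Iterating this surgery produces a sequence $\partial D = c_0, c_1, \ldots, c_N = \partial D'$ in $\mathcal{D}$ with consecutive vertices disjoint on $F$. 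The disk-busting hypothesis on every component of $\partial S$ forces each $c_i$ to cut $S$ essentially, so $\pi_A(c_i) \in \mathcal{AC}(S)$ is nonempty, and disjointness on $F$ gives $d_{\mathcal{AC}(S)}(\pi_A(c_i), \pi_A(c_{i+1})) \leq 1$.

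Now suppose $\diam_{\mathcal{AC}(S)}(\pi_A(\mathcal{D})) > 6$ and pick $D, D'$ realizing a large projection distance. The length $N$ of the surgery path is a priori unbounded, so the $1$-Lipschitz property of $\pi_A$ along the path does not immediately give a bound. Instead I track how each surgery alters the arc system $c_i \cap S$. An outermost subdisk $E$ used in surgery either meets $S$ only along parts of $\partial S$ (so the change in $\pi_A$ is by a disjoint arc coming from $F \setminus S$) or else it contains an essential wave of $c_i$ in $S$. Using largeness of the projection distance I would extract a subfamily of surgeries whose outermost subdisks carry mutually parallel arcs in $S$; the corresponding subdisks in $M$ can be arranged to be pairwise disjoint, sweeping out an embedded product region adjacent to $S$.

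The crux, and the main obstacle, is promoting this family of parallel subdisks into an honest $I$-bundle structure on all of $M$. One argues that the union of the product regions carved out above is a product neighbourhood $P$ of $S$ whose complementary closure in $M$ must collapse: otherwise the disk-busting hypothesis on $\partial S$ combined with incompressibility of $\partial M - F$ would produce an essential disk in the complement whose boundary lies in $F \setminus \partial S$, contradicting that every component of $\partial S$ is disk-busting. Hence $M$ carries an $I$-bundle structure over a compact surface, with $S$ a horizontal boundary component; the base surface is forced to have a single boundary circle, since additional boundary components would create extra vertical annuli admitting compressions disjoint from $\partial S$. Thus the vertical boundary is a single annulus, yielding alternative (1). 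The secondary bound $\diam_{\mathcal{C}(S)}(\pi_S(\mathcal{D})) \leq 12$ then follows by applying the Masur--Minsky lemma quoted above to consecutive vertices of any surgery path, which upgrades each $\mathcal{AC}(S)$-distance $1$ into a $\mathcal{C}(S)$-distance at most $2$.
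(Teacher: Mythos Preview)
The paper does not supply its own proof of this theorem; it is quoted from Li and Masur--Schleimer as a black box, so there is no in-paper argument to compare against. I can only assess your proposal on its own merits.

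Your surgery setup is fine, and the observation that the disk-busting hypothesis forces every $c_i$ to cut $S$ is correct. But the argument collapses at the step you yourself flag as ``the crux, and the main obstacle.'' You assert that from a large projection distance you ``would extract a subfamily of surgeries whose outermost subdisks carry mutually parallel arcs in $S$,'' and that these subdisks ``can be arranged to be pairwise disjoint, sweeping out an embedded product region.'' Neither claim is justified. Nothing in the surgery sequence forces the outermost arcs to fall into finitely many parallelism classes, let alone a single one; and even if you had many parallel arcs on $S$, the half-disks they bound in $M$ have no a priori reason to be disjoint or to assemble into a product. The subsequent ``collapse'' of the complement of $P$ is equally unsupported: you invoke disk-busting and incompressibility of $\partial M - F$, but give no mechanism that actually produces the offending compressing disk in the complement of $P$.

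The published proofs proceed differently. Rather than attempting to build the $I$-bundle from an unbounded surgery path, one fixes a single essential disk $D$, looks at an outermost arc of $\partial D$ in $S$ together with the half-disk in $M$ it subtends, and shows that this half-disk constrains the projection of \emph{every} essential disk uniformly---unless that half-disk is parallel into $F$, which is exactly the situation that forces the $I$-bundle structure. The rigidity is thus a local phenomenon attached to one disk, not something extracted asymptotically from a long sequence. Your sketch does not engage with this mechanism, and as written the central implication (large diameter $\Rightarrow$ $I$-bundle) remains a gap.

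Your final remark deriving the diameter-$12$ bound in $\mathcal{C}(S)$ from the $\mathcal{AC}(S)$ bound via the Masur--Minsky lemma is correct, but of course it rests on the unproven part.
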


In the end, we recall a main property of pseudo-Anosov maps.

\begin{Th}\emph{(Masur-Minsky \cite{MM1} Proposition 4.6)}\label{MM_pS}
For a surface $S_{g,b}$ with the complexity $\xi(S_{g,b})>1$, there exists $c>0$ 
such that, for any pseudo-Anosov $h\in Mod(S_{g,b})$, any $\gamma \in 
\mathcal{C}(S_{g,b})$ and any $n\in \mathbb{Z}$,
$$d_{\mathcal{C}(S_{g,b})}(h^n(\gamma),\gamma)\geq c|n|.$$
\end{Th}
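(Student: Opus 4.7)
The plan is to start with the embedded compression-body pair $V_0 \cup_S W_0 \subset S^3$ provided by Theorem \ref{compression}, whose Heegaard distance on the genus-$g$ surface $S$ is exactly $n$ and is realized by the separating meridians $\alpha_V \in \mathcal{D}(V_0)$ and $\alpha_W \in \mathcal{D}(W_0)$. Let $F_V \subset S$ denote the one-holed genus-$(g-1)$ piece into which $\alpha_V$ cuts $S$; capping $F_V$ with the 2-handle disk yields a closed surface $F_V^- \subset \partial_- V_0$, so that $\partial_- V_0 = T_V^- \sqcup F_V^-$ with $T_V^-$ a torus. Symmetric notation $F_W, F_W^-$ is used on the $W$ side. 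Because $V_0 \cup_S W_0$ is embedded in $S^3$, its complement is the disjoint union of two solid tori $D_V, D_W$ and two genus-$(g-1)$ handlebodies $H_V, H_W$, filling the torus boundaries and $F_V^-, F_W^-$ respectively.

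The target link is $L := K_V \cup K_W \cup \mathrm{core}(D_V) \cup \mathrm{core}(D_W)$, where $K_V \subset H_V$ and $K_W \subset H_W$ are knots to be chosen momentarily. Its exterior decomposes as
\[
S^3 \setminus N(L) \;=\; V \cup_S W, \qquad V := V_0 \cup_{F_V^-}\bigl(H_V\setminus N(K_V)\bigr),\ W := W_0 \cup_{F_W^-}\bigl(H_W\setminus N(K_W)\bigr),
\]
where $V, W$ are compression bodies of positive boundary $S$ (genus $g$) and torus negative boundary components, so $V \cup_S W$ is a genus-$g$ Heegaard splitting of $S^3 \setminus N(L)$. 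The inclusions $\mathcal{D}(V_0) \subset \mathcal{D}(V)$ and $\mathcal{D}(W_0) \subset \mathcal{D}(W)$ immediately give the upper bound $d_{\mathcal{C}(S)}(\mathcal{D}(V),\mathcal{D}(W)) \le n$.

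For the matching lower bound, following Minsky--Moriah--Schleimer \cite{MMS}, take a pseudo-Anosov $\phi_V$ of the closed surface $F_V^-$ and an initial knot $K_V^0 \subset H_V$, and let $K_V$ be the image of $K_V^0$ under an ambient isotopy of $H_V$ whose restriction to $\partial H_V = F_V^-$ realizes $\phi_V^m$ for a large power $m$. After verifying that $\alpha_V$ is disk-busting in $V$ and that $V$ is not an $I$-bundle with $F_V$ as horizontal boundary, Theorem \ref{Bounded disk} (Li; Masur--Schleimer) applied to $V$ with the subsurface $F_V \subset S$ confines $\pi_{F_V}(\mathcal{D}(V))$ to a $\phi_V^m$-translate of a fixed ball of diameter at most $12$ in $\mathcal{C}(F_V)$. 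By Theorem \ref{MM_pS}, for $m$ sufficiently large this translated ball is at distance greater than $M + 12$ from $\pi_{F_V}(\alpha_W)$ in $\mathcal{C}(F_V)$, where $M$ is the bounded-geodesic-image constant of Theorem \ref{Bounded}. A symmetric construction is made on the $W$ side using a pseudo-Anosov $\phi_W$ of $F_W^-$ and a knot $K_W$.

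Suppose for contradiction that a geodesic $\Gamma = (\gamma_0, \dots, \gamma_k)$ realizes $d_{\mathcal{C}(S)}(\mathcal{D}(V), \mathcal{D}(W)) = k < n$. If every $\gamma_i$ cuts $F_V$, Theorem \ref{Bounded} gives $\diam_{\mathcal{C}(F_V)}(\pi_{F_V}(\Gamma)) \le M$, which together with the separation arranged above and the projection control of $\mathcal{D}(W_0)$ around $\alpha_W$ (coming from the geodesic of Theorem \ref{compression}) yields a contradiction. Otherwise some $\gamma_i$ is disjoint from $\alpha_V$, so $\alpha_V \in \mathcal{D}(V_0)$ lies within distance one of $\gamma_i$ in $\mathcal{C}(S)$, reducing the question to bounding $d_{\mathcal{C}(S)}(\alpha_V, \mathcal{D}(W))$, which the symmetric projection bound on the $W$ side ultimately reduces to the fact that $d_{\mathcal{C}(S)}(\alpha_V, \alpha_W) = n$. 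The main obstacle is the simultaneous tuning of the pseudo-Anosov powers for $K_V$ and $K_W$ so that the constants from Theorems \ref{Bounded}, \ref{Bounded disk}, and \ref{MM_pS}, together with the initial diameters $\diam(\pi_{F_V}(\alpha_W))$ and $\diam(\pi_{F_W}(\alpha_V))$ determined by the geodesic of Theorem \ref{compression}, combine to yield equality $d_{\mathcal{C}(S)}(\mathcal{D}(V), \mathcal{D}(W)) = n$.
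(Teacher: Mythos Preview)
Your proposal does not address the stated theorem at all. The statement you were asked to prove is Masur--Minsky's Proposition~4.6 (Theorem~\ref{MM_pS} in the paper): for a fixed surface there is a constant $c>0$ such that every pseudo-Anosov $h$ satisfies $d_{\mathcal{C}(S)}(h^n(\gamma),\gamma)\ge c|n|$. The paper does not prove this result; it is quoted as an external theorem from \cite{MM1} and used as a black box. What you have written is instead a sketch of a proof of Theorem~\ref{link} (the main theorem on link complements), and in fact your argument \emph{invokes} Theorem~\ref{MM_pS} rather than establishing it. So as a proof of the stated theorem there is nothing to evaluate: the target is simply wrong.

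If your intent was to prove Theorem~\ref{link}, then your outline is broadly aligned with the paper's approach in Section~\ref{section: link complement}: start from the distance-$n$ pair $V_0\cup_S W_0$ of Theorem~\ref{compression}, glue in high-distance knot complements along the genus-$(g-1)$ negative boundaries using \cite{MMS}, and use Bounded Geodesic Image (Theorem~\ref{Bounded}) together with the disk-complex projection bound (Theorem~\ref{Bounded disk}) to show the distance does not drop. Two differences are worth noting. First, the paper does the two gluings \emph{sequentially}, proving $d(V_{F_2},W_0)=n$ first (Claim~1) and only then choosing the pseudo-Anosov on the $W$-side relative to the already-enlarged $\mathcal{D}(V_{F_2})$ (Claim~2); this sidesteps the ``simultaneous tuning'' difficulty you flag at the end. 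Second, in Claim~1 the paper exploits that $\mathcal{D}(W_0)=\{\partial D_2\}$ is a single point, so only the diameter-$2$ bound on $\pi_{S_2}(\partial D_2)$ is needed there, and Theorem~\ref{Bounded disk} is invoked only at the second step where $\mathcal{D}(V_{F_2})$ is no longer a singleton. Your sketch blurs these two stages and leaves the case analysis (some $\gamma_i$ missing $F_V$ versus all $\gamma_i$ cutting $F_V$) underspecified; in particular, the reduction ``to bounding $d_{\mathcal{C}(S)}(\alpha_V,\mathcal{D}(W))$'' in your last paragraph is not yet an argument, since $\mathcal{D}(W)$ is larger than $\{\alpha_W\}$ and you have not shown why the shortcut through $\alpha_V$ cannot beat $n$.
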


A well known method to construct pseudo-Anosov maps on $S_{g,b}$ is Thurston's construction. A 
\emph{filling pair}  on $S_{g,b}$ is a pair 
of curves $\alpha$ and $\beta$ such that each complement of $S_{g,b}-\alpha\cup \beta$ is either a disk or an annulus. 
\begin{Th}\emph{(Thurston \cite{Thurston})}\label{Thurston}
If $\alpha$ and $\beta$ is a filling pair on $S_{g,b}$, then the composition of Dehn twists $T_{\alpha}\circ T^{-1}_{\beta}$ is a pseudo-Anosov map.
\end{Th}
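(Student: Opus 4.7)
The plan combines Theorem~\ref{compression} with the Minsky--Moriah--Schleimer high-distance knot construction~\cite{MMS} and the exact-distance argument of Qiu--Zou--Guo~\cite{QZG}. Applying Theorem~\ref{compression}, I obtain $V_0 \cup_S W_0 \hookrightarrow S^3$ of genus $g$ with distance exactly $n$; by construction, $V_0$ (resp.\ $W_0$) is $S \times [0,1]$ with a $2$-handle attached along a separating meridian $c_V$ (resp.\ $c_W$), and the distance-$n$ geodesic in $\mathcal{C}(S)$ is arranged to have $c_V, c_W$ as its endpoints. The curve $c_V$ splits $S$ into a one-holed torus and a genus-$(g-1)$ subsurface $F_V' \subset S$; analogously for $c_W$ and $F_W'$. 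The inner boundary $\partial_- V_0$ is a torus $T_V$ together with a closed genus-$(g-1)$ surface $\widehat{F}_V$, bounding in $S^3 \setminus (V_0 \cup_S W_0)$ a solid torus $\Omega_V^T$ and a genus-$(g-1)$ handlebody $\Omega_V^F$; analogously for $W_0$.

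To form the link complement, remove from each of the four complementary handlebodies a regular neighborhood of an embedded knot. For $\Omega_V^T, \Omega_W^T$, take the core, producing products $T^2 \times I$ that contribute no new compressing disks. For $\Omega_V^F, \Omega_W^F$, choose knots $K_V^F, K_W^F$ whose complements are compression bodies, and pre-compose the gluing along $\widehat{F}_V, \widehat{F}_W$ with high powers of pseudo-Anosov maps supported on $F_V', F_W'$; such pseudo-Anosovs exist by Theorem~\ref{Thurston} since $\xi(F_V') = 3g - 5 \geq 4$. Let $V \supset V_0$ and $W \supset W_0$ be the resulting compression bodies. Then $V \cup_S W = S^3 \setminus L$ for a $4$-component link $L \subset S^3$, yielding a genus-$g$ Heegaard splitting of the link complement.

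To show $d(\mathcal{D}(V), \mathcal{D}(W)) = n$ in $\mathcal{C}(S)$: the upper bound is immediate from $\mathcal{D}(V_0) \subset \mathcal{D}(V)$ and $\mathcal{D}(W_0) \subset \mathcal{D}(W)$. For the lower bound, new meridians in $\mathcal{D}(V) \setminus \mathcal{D}(V_0)$ come only from the attached genus-$(g-1)$ piece and have boundary supported in $F_V' \subset S$. Theorem~\ref{Bounded disk} (Li, Masur--Schleimer) bounds the diameter of $\pi_{F_V'}(\mathcal{D}(V) \setminus \mathcal{D}(V_0))$ in $\mathcal{C}(F_V')$ by $12$, and Theorem~\ref{MM_pS} lets me take the pseudo-Anosov power so large that this bounded set is arbitrarily far from $\pi_{F_V'}(\mathcal{D}(V_0))$ and from the projections of the intermediate vertices of the distance-$n$ geodesic; likewise for $W$ and $F_W'$. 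Given a candidate short path $\delta_0, \ldots, \delta_k$ from $\mathcal{D}(V)$ to $\mathcal{D}(W)$ with $k < n$: if every $\delta_i$ cuts both $F_V'$ and $F_W'$, then Theorem~\ref{Bounded} (Bounded Geodesic Image) together with the pseudo-Anosov separation forces $\delta_0 \in \mathcal{D}(V_0)$ and $\delta_k \in \mathcal{D}(W_0)$, contradicting $d(\mathcal{D}(V_0), \mathcal{D}(W_0)) = n$; if some $\delta_i$ misses $F_V'$ or $F_W'$, it is disjoint from $c_V$ or $c_W$, and the argument of~\cite{QZG} reduces the short-path problem to a distance computation involving $c_V, c_W$.

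The main obstacle is the \emph{detour} case in which a candidate short path passes through a curve disjoint from $F_V'$ or $F_W'$. Such paths cannot be controlled by subsurface projection to that subsurface alone, requiring the exact-distance refinement of~\cite{QZG}. The essential input is that both ends of the distance-realizing geodesic from Theorem~\ref{compression} are themselves meridians ($c_V, c_W$), so any detour path can be straightened into one passing near $c_V$ and $c_W$, at which point the original distance $n$ provides the needed lower bound.
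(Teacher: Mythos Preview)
Your proposal does not address the stated theorem at all. The statement in question is Thurston's classical result that $T_{\alpha}\circ T_{\beta}^{-1}$ is pseudo-Anosov whenever $\alpha,\beta$ fill $S_{g,b}$. What you have written is instead a proof outline for Theorem~\ref{link} (the main result on link complements of exact Heegaard distance). Indeed, you explicitly \emph{invoke} Theorem~\ref{Thurston} as a tool (``such pseudo-Anosovs exist by Theorem~\ref{Thurston}''), so you are using the statement rather than proving it.

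Note also that in the paper this theorem carries no proof: it is quoted from Thurston~\cite{Thurston} as background in the preliminaries, so there is no ``paper's own proof'' to compare against. A genuine proof would proceed via Thurston's construction of a flat structure from the filling pair, representing $T_\alpha$ and $T_\beta$ as parabolic elements of $\mathrm{PSL}(2,\mathbb{R})$ acting on this structure, and checking that $T_\alpha T_\beta^{-1}$ is hyperbolic (trace of absolute value $>2$), hence has a pair of transverse measured foliations with dilatation $>1$. None of this appears in your proposal.
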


\section{\textbf{Construction of geodesics}}
\label{section: Construction of geodesics}

In this section, we will construct geodesics of exact distance in the curve graph 
$\mathcal{C}(S_g)$ in certain conditions. First, we state two criterions that have been used to extend the geodesics from \cite{IJK}.

\begin{Prop}\emph{(Ido-Jang-Kobayashi \cite{IJK} Proposition 4.1)}
\label{even}
For an integer $n\geq 4$, suppose that $[\alpha_0, \alpha_1, \cdots, \alpha_n]$ 
is a path in the curve graph $\mathcal{C}(S_g)$ satisfying the following:\\
\emph{(1)} $[\alpha_0,\cdots, \alpha_{n-2}]$ and 
$[\alpha_{n-2},\alpha_{n-1},\alpha_n]$ are geodesics in $\mathcal{C}(S_g)$.\\
\emph{(2)} 
$\diam_{\mathcal{C}(X_{n-2})}(\pi_{X_{n-2}}(\alpha_{n-4}),\pi_{X_{n-2}}(\alpha_n))\geq 
4n$, where $X_{n-2}=\overline{S_g - N(\alpha_{n-2})}$.\\
Then $[\alpha_0,\alpha_1,\cdots,\alpha_n]$ is a geodesic in $\mathcal{C}(S_g)$.
\end{Prop}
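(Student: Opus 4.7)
The plan is a proof by contradiction driven by the Bounded Geodesic Image Theorem (Theorem~\ref{Bounded}) applied to the subsurface $X_{n-2}=\overline{S_g-N(\alpha_{n-2})}$. Suppose the path $[\alpha_0,\alpha_1,\dots,\alpha_n]$ is not a geodesic, so $d_{\mathcal{C}(S_g)}(\alpha_0,\alpha_n)\le n-1$, and fix a geodesic $\Gamma'=[\beta_0,\dots,\beta_m]$ with $\beta_0=\alpha_0$, $\beta_m=\alpha_n$ and $m\le n-1$. The target is to bound $d_{\mathcal{C}(X_{n-2})}(\pi_{X_{n-2}}(\alpha_{n-4}),\pi_{X_{n-2}}(\alpha_n))$ from above in a way incompatible with hypothesis~(2).

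First I would verify that every vertex of the given subgeodesic $[\alpha_0,\dots,\alpha_{n-4}]$ cuts $X_{n-2}$: if $\alpha_i$ with $i\le n-4$ were disjoint from $\alpha_{n-2}$, then $d_{\mathcal{C}(S_g)}(\alpha_i,\alpha_{n-2})\le 1$, contradicting the geodesic distance $n-2-i\ge 2$. Similarly $\alpha_n$ cuts $X_{n-2}$ since $d_{\mathcal{C}(S_g)}(\alpha_n,\alpha_{n-2})=2$; this also ensures that the projections in hypothesis~(2) are nonempty. Applying Theorem~\ref{Bounded} to the geodesic $[\alpha_0,\dots,\alpha_{n-4}]$ then yields $d_{\mathcal{C}(X_{n-2})}(\pi_{X_{n-2}}(\alpha_0),\pi_{X_{n-2}}(\alpha_{n-4}))\le M$.

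Next I would analyze the shorter geodesic $\Gamma'$. Any two vertices $\beta_j,\beta_k$ of $\Gamma'$ that miss $X_{n-2}$ are disjoint from $\alpha_{n-2}$, so $d_{\mathcal{C}(S_g)}(\beta_j,\beta_k)\le 2$; since $\Gamma'$ is a geodesic this forces $|j-k|\le 2$, and no $\beta_j$ can equal $\alpha_{n-2}$ since that would push $m\ge n$. Hence $\Gamma'$ decomposes into at most two maximal \emph{cutting} subpaths $[\beta_0,\dots,\beta_{p-1}]$ and $[\beta_{q+1},\dots,\beta_m]$ separated by a block of at most three consecutive vertices lying in $X_{n-2}$ as simple closed curves. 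Theorem~\ref{Bounded} bounds the projection diameter of each cutting subpath by $M$. Across the missing block, the projections of the boundary vertices $\beta_{p-1}$ and $\beta_{q+1}$ each have $\mathcal{C}(X_{n-2})$-distance at most one from the adjacent missing curve (disjoint curves in $S_g$ yield disjoint configurations in $X_{n-2}$), and the missing curves themselves are pairwise disjoint in $X_{n-2}$, contributing a uniformly bounded jump $C$. Summing gives
\[
d_{\mathcal{C}(X_{n-2})}\bigl(\pi_{X_{n-2}}(\alpha_0),\pi_{X_{n-2}}(\alpha_n)\bigr)\le 2M+C.
\]

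The triangle inequality then delivers $d_{\mathcal{C}(X_{n-2})}(\pi_{X_{n-2}}(\alpha_{n-4}),\pi_{X_{n-2}}(\alpha_n))\le 3M+C$. The coefficient in the hypothesis's lower bound $4n$ is calibrated so that $4n>3M+C$ for every $n\ge 4$ under the uniform value of $M$ in Theorem~\ref{Bounded}, producing the contradiction. The main obstacle I anticipate is the bookkeeping at the missing block of $\Gamma'$: one must carefully justify via the arc-and-curve graph and Masur--Minsky's Lemma~2.2 that adjacent cutting and missing vertices have $\mathcal{C}(X_{n-2})$-projections at distance at most one, and rigorously confirm that at most three consecutive vertices of $\Gamma'$ can miss $X_{n-2}$; once these technicalities are in place, the remainder is a tidy threefold invocation of the Bounded Geodesic Image Theorem.
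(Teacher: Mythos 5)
The decisive step of your argument fails. Everything up to the final comparison is structurally reasonable, but the punchline ``$4n>3M+C$ for every $n\geq 4$ under the uniform value of $M$'' is asserted, not proved, and it is false: the hypothesis gives a lower bound of $4n$, which is only $16$ at $n=4$, while $M$ in Theorem \ref{Bounded} is the Bounded Geodesic Image constant (Webb's uniform value is on the order of $100$, and the surface-dependent constant is no better), so $3M+C$ dwarfs $4n$ except for very large $n$. Since the proposition is stated, and used in this paper, for all $n\geq 4$, this is a genuine gap rather than a calibration detail. The constant $4n$ is in fact calibrated to an elementary coarse-Lipschitz argument that avoids BGIT entirely (this is essentially the Ido--Jang--Kobayashi proof; the present paper only cites it): if $d_{\mathcal{C}(S_g)}(\alpha_0,\alpha_n)\leq n-1$, then no vertex of a shortest geodesic $\Gamma'$ can be isotopic to $\alpha_{n-2}$ (as you note, that would force length $\geq (n-2)+2=n$), so every vertex of $\Gamma'$ cuts $X_{n-2}$; consecutive vertices are disjoint curves cutting $X_{n-2}$, so by Masur--Minsky's Lemma 2.2 (quoted in Section \ref{section: Preliminaries}) each edge moves the projection by at most $2$, giving $d_{\mathcal{C}(X_{n-2})}(\pi_{X_{n-2}}(\alpha_0),\pi_{X_{n-2}}(\alpha_n))\leq 2(n-1)$, and similarly $d_{\mathcal{C}(X_{n-2})}(\pi_{X_{n-2}}(\alpha_0),\pi_{X_{n-2}}(\alpha_{n-4}))\leq 2(n-4)$ along the given geodesic. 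Adding these, together with the diameter $\leq 2$ of each individual projection, bounds $\diam_{\mathcal{C}(X_{n-2})}(\pi_{X_{n-2}}(\alpha_{n-4}),\pi_{X_{n-2}}(\alpha_n))$ by roughly $4n-4<4n$, the desired contradiction. If you insist on using BGIT, the hypothesis would have to read ``$\geq 2M+\text{const}$,'' which is not what is assumed.

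A secondary conceptual slip drives your unnecessary block decomposition of $\Gamma'$: since $X_{n-2}$ is the complement of an annular neighborhood of $\alpha_{n-2}$, the only essential curves with empty projection to $X_{n-2}$ are those isotopic to $\alpha_{n-2}$ itself. A vertex disjoint from $\alpha_{n-2}$ but not isotopic to it lies inside $X_{n-2}$ and therefore \emph{cuts} $X_{n-2}$ (it projects to itself), so your ``missing block of at most three consecutive vertices lying in $X_{n-2}$'' consists of vertices that pose no obstruction at all. Once $\beta_j\simeq\alpha_{n-2}$ is excluded, every vertex of $\Gamma'$ cuts $X_{n-2}$ and a single estimate along the whole of $\Gamma'$ suffices; the two cutting subpaths, the crossing constant $C$, and the $|j-k|\leq 2$ bookkeeping can be deleted. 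Your preliminary verifications that $\alpha_i$ cuts $X_{n-2}$ for $i\leq n-4$ and that $\alpha_n$ cuts $X_{n-2}$ are correct and are exactly the right ingredients to feed into the Lipschitz argument above.
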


\begin{Prop}\emph{(Ido-Jang-Kobayashi \cite{IJK} Proposition 4.4)}
\label{odd}
 For an integer $n\geq 3$, suppose that $[\alpha_0, \alpha_1, \cdots, \alpha_n]$ 
is a path in the curve graph $\mathcal{C}(S_g)$ satisfying the following:\\
 \emph{(1)} $[\alpha_0,\cdots, \alpha_{n-1}]$ and 
$[\alpha_{n-2},\alpha_{n-1},\alpha_n]$ are geodesics in $\mathcal{C}(S_g)$.\\ 
 \emph{(2)} the union $\alpha_{n-2}\cup \alpha_{n-1}$ is nonseparating in $S_g$, 
and 
$\diam_{\mathcal{C}(S'_g)}(\pi_{S'_g}(\alpha_0),\pi_{S'_g}
(\alpha_n))\geq 2n$, where $S'_g=\overline{S_g - N(\alpha_{n-2}\cup 
\alpha_{n-1})}$.\\
Then $[\alpha_0,\alpha_1,\cdots,\alpha_n]$ is a geodesic in $\mathcal{C}(S_g)$.
\end{Prop}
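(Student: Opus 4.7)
The plan is to argue by contradiction: assume $d_{\mathcal{C}(S_g)}(\alpha_0, \alpha_n) \leq n-1$ and fix a realizing geodesic $\Gamma_\beta = [\beta_0 = \alpha_0, \beta_1, \ldots, \beta_m = \alpha_n]$ with $m \leq n-1$. The goal is to upper bound $d_{\mathcal{C}(S'_g)}(\pi_{S'_g}(\alpha_0), \pi_{S'_g}(\alpha_n))$ by a universal constant, contradicting hypothesis (2). First I would verify that $S'_g$ is an essential subsurface with $\xi(S'_g) \geq 1$: since $\alpha_{n-2}$ and $\alpha_{n-1}$ are disjoint (they are adjacent on the geodesic $[\alpha_0, \ldots, \alpha_{n-1}]$) and their union is nonseparating, $S'_g$ is connected of positive complexity and $\pi_{S'_g}$ is defined on every curve meeting $\alpha_{n-2} \cup \alpha_{n-1}$.

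I would then split on whether every $\beta_j$ cuts $S'_g$. If so, the Bounded Geodesic Image Theorem (Theorem \ref{Bounded}) applied to $\Gamma_\beta$ gives $d_{\mathcal{C}(S'_g)}(\alpha_0, \alpha_n) \leq M$, contradicting hypothesis (2). Otherwise let $j_0 = \min\{j : \beta_j \text{ misses } S'_g\}$ and $j_1 = \max\{j : \beta_j \text{ misses } S'_g\}$. A missing $\beta_j$ is disjoint from both $\alpha_{n-2}$ and $\alpha_{n-1}$, so the triangle inequality together with $d_{\mathcal{C}(S_g)}(\alpha_0, \alpha_{n-1}) = n-1$ forces $j \geq n-2$. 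Moreover, $\alpha_n$ itself cuts $S'_g$ since $d_{\mathcal{C}(S_g)}(\alpha_n, \alpha_{n-2}) = 2$, so $j_1 < m$. Enumerating $(j_0, j_1) \in \{(n-2, n-2), (n-2, n-1), (n-1, n-1)\}$ and ruling out $\beta_j = \alpha_{n-2}$ (which would force $d_{\mathcal{C}(S_g)}(\beta_j, \alpha_n) = 2 \neq m-j$) and $\beta_j = \alpha_{n-1}$ (which would force $d_{\mathcal{C}(S_g)}(\alpha_0, \beta_j) = n-1 \neq j$), the only viable configuration is $j_0 = j_1 = n-2$, $m = n-1$, with $\beta_{n-2}$ essential in $S'_g$.

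In this pinned-down case, I would apply Theorem \ref{Bounded} to the prefix $[\beta_0, \ldots, \beta_{n-3}]$ (every vertex of which cuts $S'_g$) to obtain $d_{\mathcal{C}(S'_g)}(\beta_0, \beta_{n-3}) \leq M$, and then use the Masur-Minsky lemma cited just above Theorem \ref{Bounded} to bridge the two short transitions from $\beta_{n-3}$ to $\beta_{n-2}$ and from $\beta_{n-2}$ to $\beta_{n-1} = \alpha_n$. Because $\beta_{n-2}$ is essential in $S'_g$ and disjoint from its neighbors in $\Gamma_\beta$, each transition adds at most $2$ to the projection distance, yielding $d_{\mathcal{C}(S'_g)}(\alpha_0, \alpha_n) \leq M + 4$, which again contradicts hypothesis (2). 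The main obstacle is this second case analysis: carefully ruling out that a missing $\beta_j$ is boundary-parallel in $S'_g$ (that is, isotopic to $\alpha_{n-2}$ or $\alpha_{n-1}$) requires invoking both geodesic-subpath hypotheses simultaneously, and the nonseparating assumption on $\alpha_{n-2} \cup \alpha_{n-1}$ is exactly what guarantees that $S'_g$ has complexity sufficient for the Bounded Geodesic Image Theorem to apply and for projection distances to be controlled.
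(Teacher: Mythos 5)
Your principal case rests on a comparison that does not work: after applying the Bounded Geodesic Image Theorem (Theorem \ref{Bounded}) to the shorter geodesic you get $d_{\mathcal{C}(S'_g)}(\pi_{S'_g}(\alpha_0),\pi_{S'_g}(\alpha_n))\leq M$, and you declare this to contradict hypothesis (2), which only gives a lower bound of $2n$. Since $M$ is a fixed constant (surface-dependent in \cite{MM2}, uniform but large in \cite{Webb}, and in no version smaller than $6$), there is no contradiction for the small values of $n$ the proposition must cover (already $n=3$ gives $2n=6$); as written your argument proves the statement only for $n>M/2$, not for all $n\geq 3$. The threshold $2n$ in hypothesis (2) is calibrated for a different, elementary estimate, which is what the source \cite{IJK} actually uses (note the present paper does not prove Proposition \ref{odd}; it quotes it from there): once every vertex $\beta_j$ of a hypothetical geodesic $[\beta_0=\alpha_0,\ldots,\beta_m=\alpha_n]$ with $m\leq n-1$ is known to cut $S'_g$, consecutive vertices are disjoint curves cutting $S'_g$, so by the Masur--Minsky lemma quoted in Section \ref{section: Preliminaries} each edge moves the projection by at most $2$ in $\mathcal{C}(S'_g)$; summing over the at most $n-1$ edges gives $\diam_{\mathcal{C}(S'_g)}(\pi_{S'_g}(\alpha_0),\pi_{S'_g}(\alpha_n))\leq 2(n-1)<2n$, the desired contradiction, with no appeal to Theorem \ref{Bounded} at all.

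Your treatment of the vertices missing $S'_g$ is also muddled, although the underlying computations are the right ones. Because $\alpha_{n-2}$ and $\alpha_{n-1}$ are disjoint, a curve with empty projection to $S'_g$ is isotopic into $N(\alpha_{n-2}\cup\alpha_{n-1})$, hence isotopic to $\alpha_{n-2}$ or to $\alpha_{n-1}$; your own exclusions (using $d_{\mathcal{C}(S_g)}(\alpha_0,\alpha_{n-2})=n-2$ and $d_{\mathcal{C}(S_g)}(\alpha_{n-2},\alpha_n)=2$, respectively $d_{\mathcal{C}(S_g)}(\alpha_0,\alpha_{n-1})=n-1$ and $d_{\mathcal{C}(S_g)}(\alpha_{n-1},\alpha_n)=1$) rule out both possibilities, each forcing $m=n>n-1$. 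So the ``otherwise'' case is vacuous, and the configuration you retain --- $\beta_{n-2}$ missing $S'_g$ yet ``essential in $S'_g$'' --- is self-contradictory; the bridging step built on it would anyway need $\beta_{n-2}$ to cut $S'_g$ for the Masur--Minsky lemma to apply, and its conclusion $M+4$ inherits the same $M$-versus-$2n$ problem. The correct skeleton is exactly your exclusion argument (every $\beta_j$ cuts $S'_g$) followed by the per-edge bound of $2$ summed along the geodesic, rather than the Bounded Geodesic Image Theorem.
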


With these two propositions, we will be able to construct the geodesics in the $\mathcal{C}(S_{g \geq 3})$, where $S_{g \geq 3}$ is the boundary of a handlebody. The proof given below is similar to the construction of the geodesics in \cite{IJK}. The difference is that we also need to take account of the meridians when we choose the curves.

\begin{Lem}
\label{lem: geodesics}
Let $V$ be one handlebody in the standard Heegaard splitting $S^3 = V \cup_S W$ of $S^3$. For 
any positive integer $n$, there exists a geodesic $[\alpha_0,\alpha_1,\cdots,\alpha_n]$ 
in the curve graph $\mathcal{C}(S)$ such that  
$|\alpha_{i-2} \cap \alpha_i| = 1$ for any positive even number $i \leq n$. The curve $\alpha_k$ is a meridian of $V$ if $k$ is divisible by 4. Moreover, $\alpha_k$ is a meridian of $V$ if $k$ is odd and $k < n$. If $n$ is odd, then $|\alpha_n \cap \alpha_{n-2}| = 1$.
\end{Lem}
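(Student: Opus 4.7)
The plan is to argue by induction on $n$. The base cases $n \in \{0, 1, 2, 3\}$ are handled by direct construction: since $g \geq 3$ there is ample room in $S$ to pick in turn a nonseparating meridian $\alpha_0 \in \mathcal{D}(V)$, a curve $\alpha_2$ with $|\alpha_0 \cap \alpha_2| = 1$, a meridian $\alpha_1$ disjoint from $\alpha_0 \cup \alpha_2$, and (for $n = 3$) a meridian $\alpha_3$ disjoint from $\alpha_2$ with $|\alpha_1 \cap \alpha_3| = 1$; for $n = 3$ that this path is actually a geodesic is checked by a direct subsurface projection argument.

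For the inductive step with $n \geq 4$, I extend a geodesic of length $n-2$ to one of length $n$ via Proposition~\ref{even} when $n$ is even, and a geodesic of length $n-1$ to one of length $n$ via Proposition~\ref{odd} when $n$ is odd. In each case the diameter condition of the proposition is secured by applying a large power $h^N$ of a pseudo-Anosov $h = T_\mu T_\nu^{-1}$ on the relevant subsurface (namely $X_{n-2} = \overline{S - N(\alpha_{n-2})}$ for Prop.~\ref{even}, and $S'_g = \overline{S - N(\alpha_{n-2} \cup \alpha_{n-1})}$ for Prop.~\ref{odd}), built from a filling pair $(\mu,\nu)$ via Theorem~\ref{Thurston} and analyzed using Theorem~\ref{MM_pS} to yield linear growth in subsurface projection diameter. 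Because $h$ fixes the boundary of its subsurface pointwise, it maps curves meeting $\alpha_{n-2}$ once to curves meeting $\alpha_{n-2}$ once, so the intersection condition $|\alpha_{n-2} \cap \alpha_n| = 1$ is preserved under $h^N$; similarly one arranges the nonseparating hypothesis in Prop.~\ref{odd} by choosing $\alpha_{n-1}$ nonseparating in $S - \alpha_{n-2}$, which is possible thanks to $g \geq 3$.

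The delicate point is to ensure that $\alpha_n$ (and, when applying Prop.~\ref{even}, also $\alpha_{n-1}$) is a meridian of $V$ whenever the parity of its index demands it. The key trick is to pick the filling pair $(\mu,\nu)$ inside $\mathcal{D}(V) \cap \mathcal{AC}(\text{subsurface})$: since $\mu$ and $\nu$ bound disks in $V$, the Dehn twists $T_\mu$ and $T_\nu$ extend to self-homeomorphisms of the handlebody $V$ and hence preserve $\mathcal{D}(V)$, so $h^N$ does too; starting with $\alpha_n^{(0)} \in \mathcal{D}(V)$ therefore yields $\alpha_n = h^N(\alpha_n^{(0)}) \in \mathcal{D}(V)$. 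The main technical obstacle is to produce such a filling pair of meridians in each subsurface. When $\alpha_{n-2}$ is itself a meridian, cutting $V$ along its bounding disk gives a genus-$(g-1)$ handlebody whose disk graph contains many meridians in $X_{n-2}$, out of which a filling pair of $X_{n-2}$ can be built; in the remaining case $n \equiv 0 \pmod{4}$ (where $\alpha_{n-2}$ is not a meridian), the adjacent vertices $\alpha_{n-3}$ and $\alpha_{n-1}$ are meridians of $V$ lying in $\mathcal{D}(V) \cap \mathcal{AC}(X_{n-2})$, and twisting them around further meridians in $X_{n-2}$ furnishes the desired filling pair. The same mechanism, applied to $S'_g$, handles Prop.~\ref{odd}.
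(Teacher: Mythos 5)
Your overall architecture is the same as the paper's: extend the geodesic two steps at a time with Proposition~\ref{even} (and finish an odd length with Proposition~\ref{odd}), secure the diameter hypotheses with high powers of Thurston pseudo-Anosovs supported on the relevant subsurface via Theorems~\ref{Thurston} and~\ref{MM_pS}, and force the meridian pattern by building the pseudo-Anosov from a filling pair of meridians so that it extends over $V$ and hence preserves $\mathcal{D}(V)$. Your odd step differs in flavor (a pseudo-Anosov on $S'_g$ instead of the paper's appeal to the infinite diameter of $\mathcal{C}(S'_g)$ and a curve cutting off a pair of pants); that variant is legitimate, and since $\alpha_n$ need not be a meridian when $n$ is odd you do not need a filling pair of meridians in $S'_g$ at all, so the phrase ``the same mechanism'' overreaches there. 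Also, $\alpha_{n-1}$ and $\alpha_{n-2}$ are already fixed by the even-length stage, so you cannot ``choose $\alpha_{n-1}$ nonseparating in $S-\alpha_{n-2}$''; fortunately the hypothesis is automatic, since $|\alpha_{n-3}\cap\alpha_{n-1}|=1$ while $\alpha_{n-2}$ is disjoint from $\alpha_{n-3}\cup\alpha_{n-1}$.

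The genuine gap is in the case you label $n\equiv 0 \pmod 4$, where $\alpha_{n-2}$ is not a meridian but the new vertices $\alpha_{n-1}$ and $\alpha_n$ must be. First, your recipe cites ``the adjacent vertices $\alpha_{n-3}$ and $\alpha_{n-1}$,'' but $\alpha_{n-1}$ does not exist yet at that stage; the vertices adjacent to $\alpha_{n-2}$ in the already-built geodesic are $\alpha_{n-3}$ (a meridian disjoint from $\alpha_{n-2}$) and $\alpha_{n-4}$ (a meridian meeting $\alpha_{n-2}$ once, hence not a curve of $X_{n-2}$). More seriously, ``twisting them around further meridians in $X_{n-2}$'' cannot by itself produce a filling pair of $X_{n-2}$: the separating meridian $\partial N(\alpha_{n-4}\cup\alpha_{n-2})$ cuts $X_{n-2}$ into a pair of pants $P$ (containing the two copies of $\alpha_{n-2}$ in its boundary) and a one-holed genus $g-1$ piece $Y$; the curve $\alpha_{n-3}$ and every meridian of the complementary genus $g-1$ handlebody lie in $Y$, and twisting curves of $Y$ along meridians contained in $Y$ (or along $\partial N(\alpha_{n-4}\cup\alpha_{n-2})$, which they miss) keeps them in $Y$, so no pair obtained this way can fill $X_{n-2}$. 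What is missing is precisely the paper's construction (Fig.~\ref{nonmeridian}): take a meridian $\gamma$ meeting $\alpha_{n-2}$ exactly once; then $\partial N(\alpha_{n-2}\cup\gamma)$ is a meridian lying in $X_{n-2}$ that does cross $P$ essentially, and with a suitably positioned second meridian it fills $X_{n-2}$. (Your other case, where $\alpha_{n-2}$ is a meridian, matches the paper's Fig.~\ref{meridian} and is fine at the same level of detail; and one still needs to note, as the paper implicitly does, that the later configurations $(\alpha_{i-2},\alpha_i)$ are homeomorphic images, via maps extending over $V$, of the initial ones, so the same constructions apply at every stage.) Until a filling pair of meridians of $X_{n-2}$ is exhibited in the non-meridian case, your pseudo-Anosov cannot be guaranteed to extend over $V$, and the induction fails exactly where the meridian pattern for $4\mid k$ and for odd $k$ has to be propagated.
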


\begin{figure}[ht]
\vspace{-0.2cm}
\scalebox{.20}{\includegraphics[origin=c]{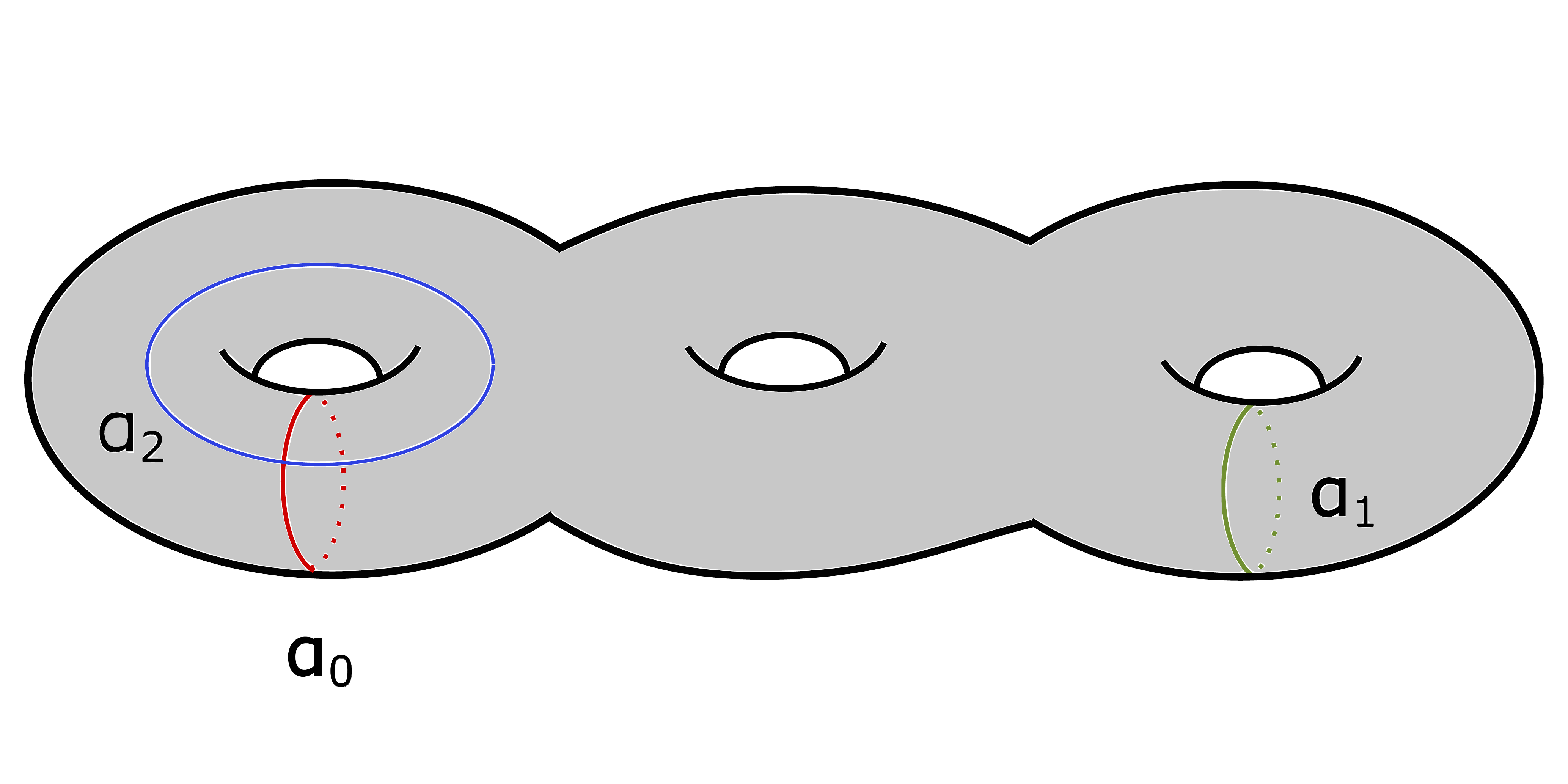}}
\vspace{-0cm}
\caption{A geodesic [$\alpha_0$,$\alpha_1$,$\alpha_2$] on the boundary surface of a genus $g \geq 3$ handlebody $V$ with $|\alpha_0 \cap \alpha_2| = 1$; $\alpha_0$ and $\alpha_1$ are meridians of $V$.}
\label{geodesics}
\end{figure}

\begin{proof}
 First, let us consider the case when $n$ is even with $n \geq 4$. Let $\alpha_0$, $\alpha_1$ 
and $\alpha_2$ be nonseparating simple closed curves on $S$ such that 
$\alpha_1$ is disjoint from $\alpha_0$ and $\alpha_2$ and $|\alpha_0\cap 
\alpha_2| = 1$, see Fig.~\ref{geodesics}. Notice that $\alpha_0$ and $\alpha_1$ 
are meridians of $V$ and $[\alpha_0,\alpha_1,\alpha_2]$ is a 
geodesic of length 2 in $\mathcal{C}(S)$. Let $X_2=\overline{S-N(\alpha_2)}$ be the closure of the complement of regular neighborhood of $\alpha_2$, then one can choose 
a partial pseudo-Anosov map $\varphi_2: S\rightarrow S$ such that $\varphi_2$ 
fixes $\alpha_2$ and 
$\diam_{\mathcal{C}(X_2)}(\pi_{X_2}(\alpha_0), \pi_{X_2}(\varphi_2(\alpha_0)))\geq 4n$. The 
existence of such partial pseudo-Anosov map is justified by Theorem \ref{MM_pS}. Furthermore, one can choose a pseudo-Anosov map that can be extended over the handlebody $V$.

\begin{figure}[ht]
\vspace{-0.2cm}
\scalebox{.20}{\includegraphics[origin=c]{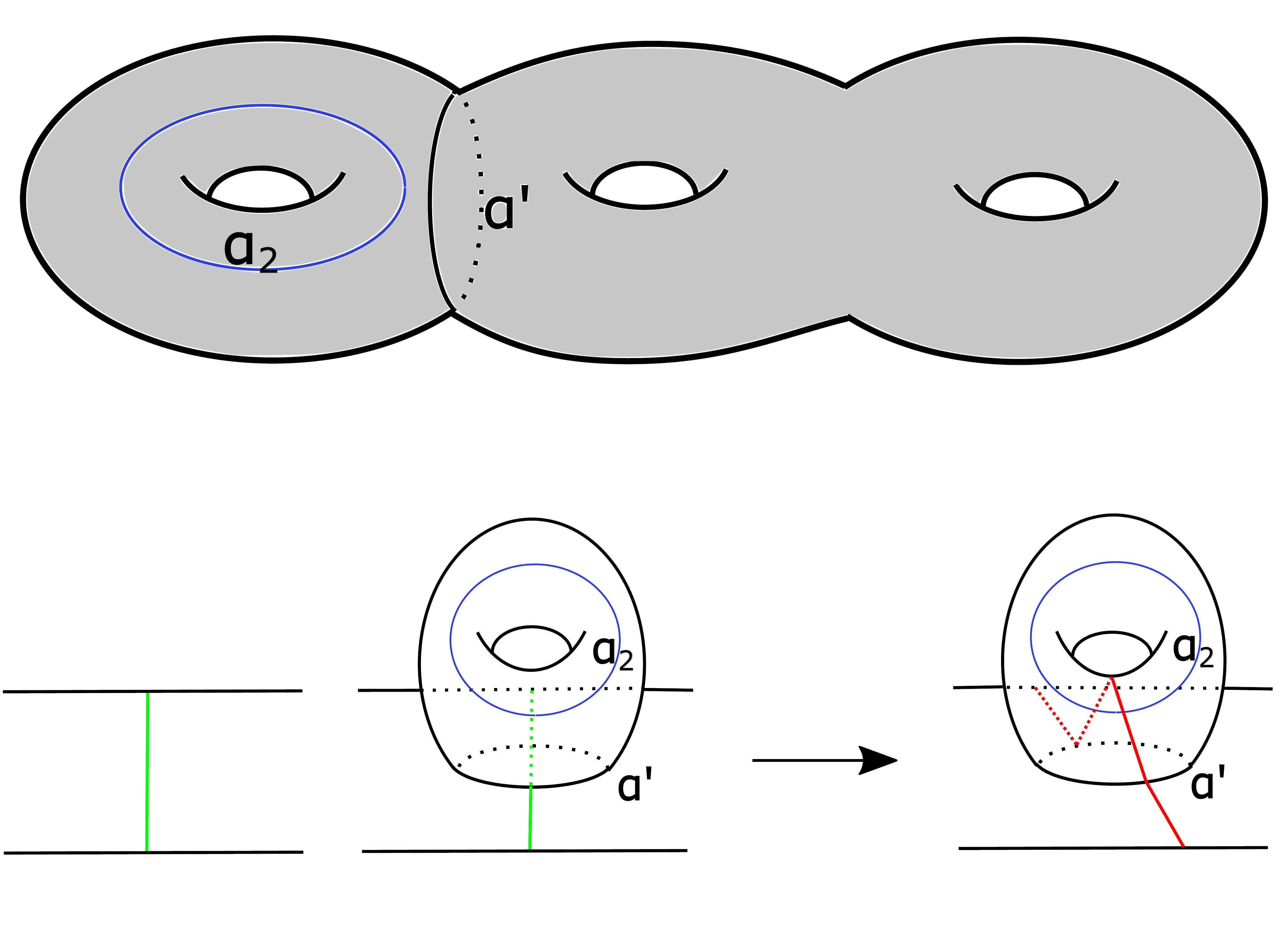}}
\vspace{-0cm}
\caption{Curve $\alpha'$ intersects one subarc of a meridian in a filling pair of meridians in $V_{g-1}$. The subarc of a meridian in green has been replaced by the subarc in red. The resulting curve is a meridian that intersects $\alpha_2$ once. }
\label{nonmeridian}
\end{figure}

Since $|\alpha_0 \cap \alpha_2| = 1$ and $\alpha_0$ is meridian, then the boundary curve $\alpha' = \partial N(\alpha_2 \cup \alpha_0)$ is a separating meridian.
The curve $\alpha'$ separates the handlebody into one solid torus and a handlebody of genus $g-1$. On the handlbody of genus $g-1 \geq 2$, there exists a filling pair, as illustrated in Fig.~\ref{nonmeridian}.  One can choose a filling pair in the positive boundary $S_{g-1}$ such that only one subarc of one meridian intersects $\alpha'$ twice. To make the pair fill the subsurface $X_2$, one can replace the subarc of the meridian with the arc that passes over the $\alpha_2$ once. The resulting new curve is denoted as $\gamma$, then $\partial N(\alpha_2 \cup \gamma)$ is a meridian. Together with the other meridian, they are a filling pair of the subsurface $X_2$. 

Let the filling pair of meridians of $X_2$ be $\alpha$, $\beta$, and define the map 
$\varphi_2 = T_{\alpha}\circ T^{-1}_{\beta}$. By Thurston's construction, $\varphi_2$ is a pseudo-Anosov map, and it can be extended over the handlebody $V$. Iterate it if needed to satisfy $\diam_{\mathcal{C}(X_2)}(\pi_{X_2}(\alpha_0), \pi_{X_2}(\varphi_2(\alpha_0)))\geq 4n$.

Both $\alpha_3=\varphi_2(\alpha_1)$ and $\alpha_4=\varphi_2(\alpha_0)$ are meridians, and 
$[\alpha_2,\alpha_3,\alpha_4]$ is a geodesic of length 2 with $|\alpha_2\cap 
\alpha_4|=1$. By Proposition \ref{even}, 
$[\alpha_0,\alpha_1,\alpha_2,\alpha_3,\alpha_4]$ is a geodesic of length 4.

\begin{figure}[ht]
\vspace{-0.2cm}
\scalebox{.20}{\includegraphics[origin=c]{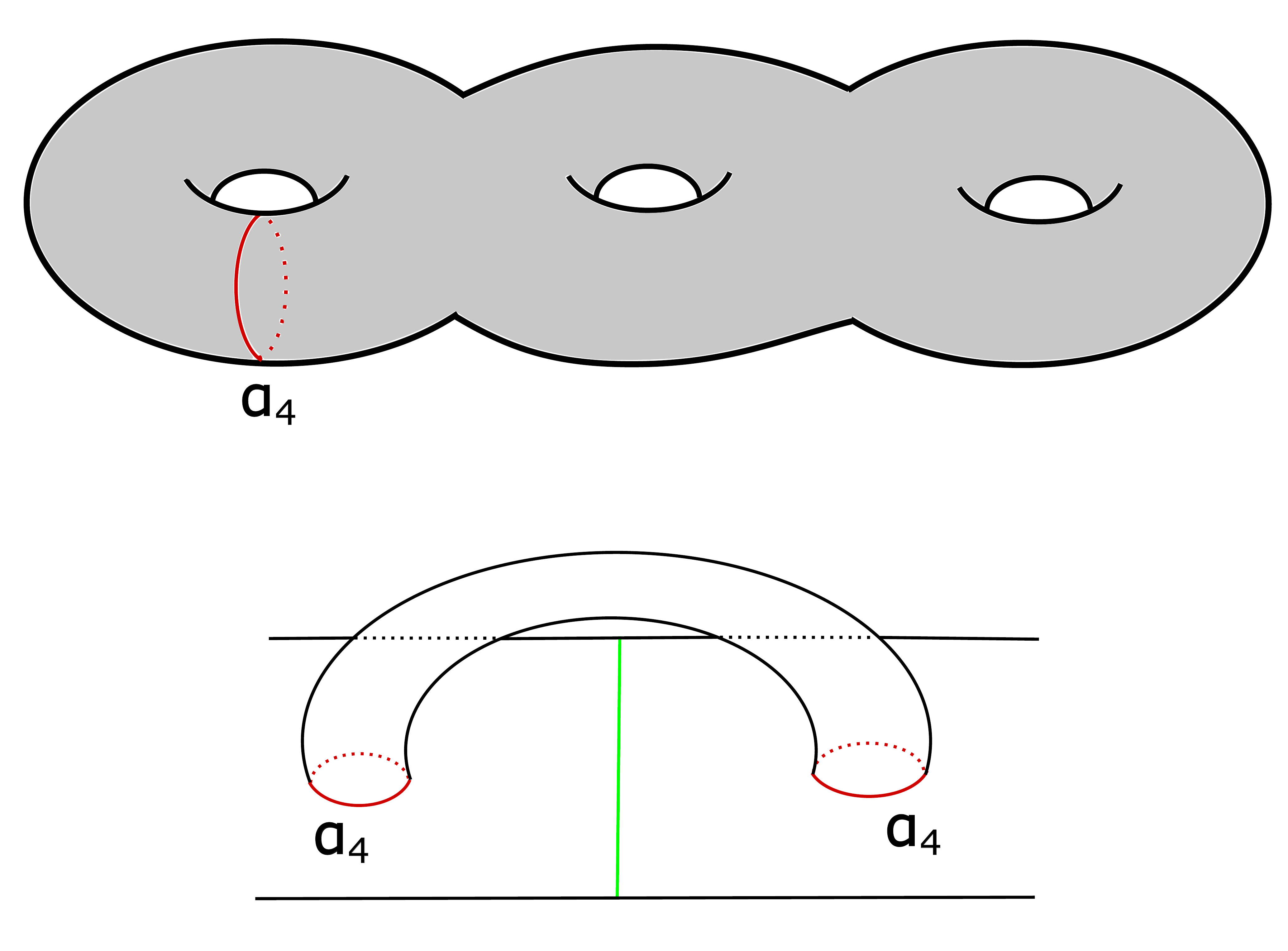}}
\vspace{-0cm}
\caption{ Remove two open disks from two distinct disk complements of a filling pair of meridians on $S_{g-1}$ to obtain the subsurface.}
\label{meridian}
\end{figure}

The curve $\alpha_4$ is a nonseparating meridian of $V$, and the disk bounded by $\alpha_4$ cuts $V$ into a handlebody of genus $g-1$, as illustrated in Fig.~\ref{meridian}. Again, on the genus $g-1$ handlebody, there exists a filling pair of meridians, the subsurface $X_4=\overline{S - N(\alpha_4)}$ can be obtained from removal of two disks in two disk complements of a filling pair of meridians. Similar as before, one can construct a pseudo-Anosov $\varphi_4$ on $X_4$, and it can be extended over $V$. Let $\alpha_5 = \varphi_4(\alpha_3)$ and $\alpha_6 = \varphi_4(\alpha_2)$, then $[\alpha_0,\alpha_1,\cdots,\alpha_5,\alpha_6]$ is a geodesic of length 6.

Continue in this way, 
we can construct a geodesic $[\alpha_0,\alpha_1,\cdots,\alpha_n]$ of 
even length. Assume that $[\alpha_0,\alpha_1,\dots,\alpha_i]$ is a geodesic 
with $|\alpha_{i-2}\cap \alpha_i|=1$ for even $i< n$. Let 
$X_i=\overline{S-N(\alpha_i)}$, then we can take a partial pseuso-Anosov map 
$\varphi_i:S\rightarrow S$ such that $\varphi_i$ fixes $\alpha_i$ and 
$\diam_{\mathcal{C}(X_i)}(\pi_{X_i}(\alpha_{i-2}),\pi_{X_i}(\varphi_i(\alpha_{i-2})))\geq 
4n$. The pseudo-Anosov map $\varphi_i$ can be chosen to be able to be extended 
over the handlbody $V$ as we did before. If $i$ is not divisible by 4, then $\alpha_i$ is not a meridian. The $\alpha_{i-2}$ is a meridian and $|\alpha_i \cap \alpha_{i-2}| = 1$, so the boundary curve $\partial N(\alpha_i \cup \alpha_{i-2})$ is meridian, and it bounds a disk that cuts $V$ into a solid torus and a handlebody of genus $g-1$. Hence, we end up with the case in Fig.~\ref{nonmeridian}. Similarly, the other case is illustrated in Fig.~\ref{meridian}.

Denote 
$\alpha_{i+1}=\varphi_i(\alpha_{i-1})$ and 
$\alpha_{i+2}=\varphi_i(\alpha_{i-2})$ , then 
$[\alpha_{i},\alpha_{i+1},\alpha_{i+2}]$ is a geodesic of length 2. Again, by Proposition \ref{even}, the extended path 
$[\alpha_0,\alpha_1,\cdots,\alpha_{i+1},\alpha_{i+2}]$ is a geodesic with 
$|\alpha_i\cap \alpha_{i+2}|=1$. The 
construction yields a geodesic 
$[\alpha_0,\alpha_1,\cdots,\alpha_{n-1},\alpha_{n}]$ of length $n$ being even. The curves are all meridians except for the ones $\alpha_{4m-2}$, where $m$ is a positive integer. Moreover, 
$|\alpha_0\cap \alpha_2|=1$ and $|\alpha_{i-2}\cap \alpha_i|=1$ for any positive even number $i \leq n$.

Next, we discuss the geodesics of odd length. Suppose that $n-1$ is even with $n \geq 3$, let $[\alpha_0,\alpha_1,\cdots,\alpha_{n-1}]$ be a geodesic 
constructed as the previous case. Since $\alpha_0$, $\alpha_1$ and $\alpha_2$ are nonseparating, then all curves in the geodesic are 
non-separating by construction. By construction, $|\alpha_{n-3}\cap \alpha_{n-1}|=1$, 
and $\alpha_{n-2}$ is disjoint from $\alpha_{n-3}\cup \alpha_{n-1}$, then 
$\alpha_{n-2}\cup \alpha_{n-1}$ is nonseparating. Let $S' = \overline{S-N(\alpha_{n-2} \cup \alpha_{n-1})}$, Theorem \ref{infinite diameter} states that the curve graph $\mathcal{C}(S')$ has infinite diameter. Then there exists $\gamma'$ in $S'$ with $d_{\mathcal{C}(S')}(\gamma', \pi_{S'}(\alpha_0)) > 2n + 2$. Since the genus $g \geq 3$, we can find $\gamma''$ in $S'$ with $d_{\mathcal{C}(S')}(\gamma'', \gamma') \leq 2$ and that $\gamma''$ cuts off a pair of pants $P$ with $\partial N(\alpha_{n-2}) \subset \partial P$.

\begin{figure}[ht]
\vspace{-0.2cm}
\scalebox{.20}{\includegraphics[origin=c]{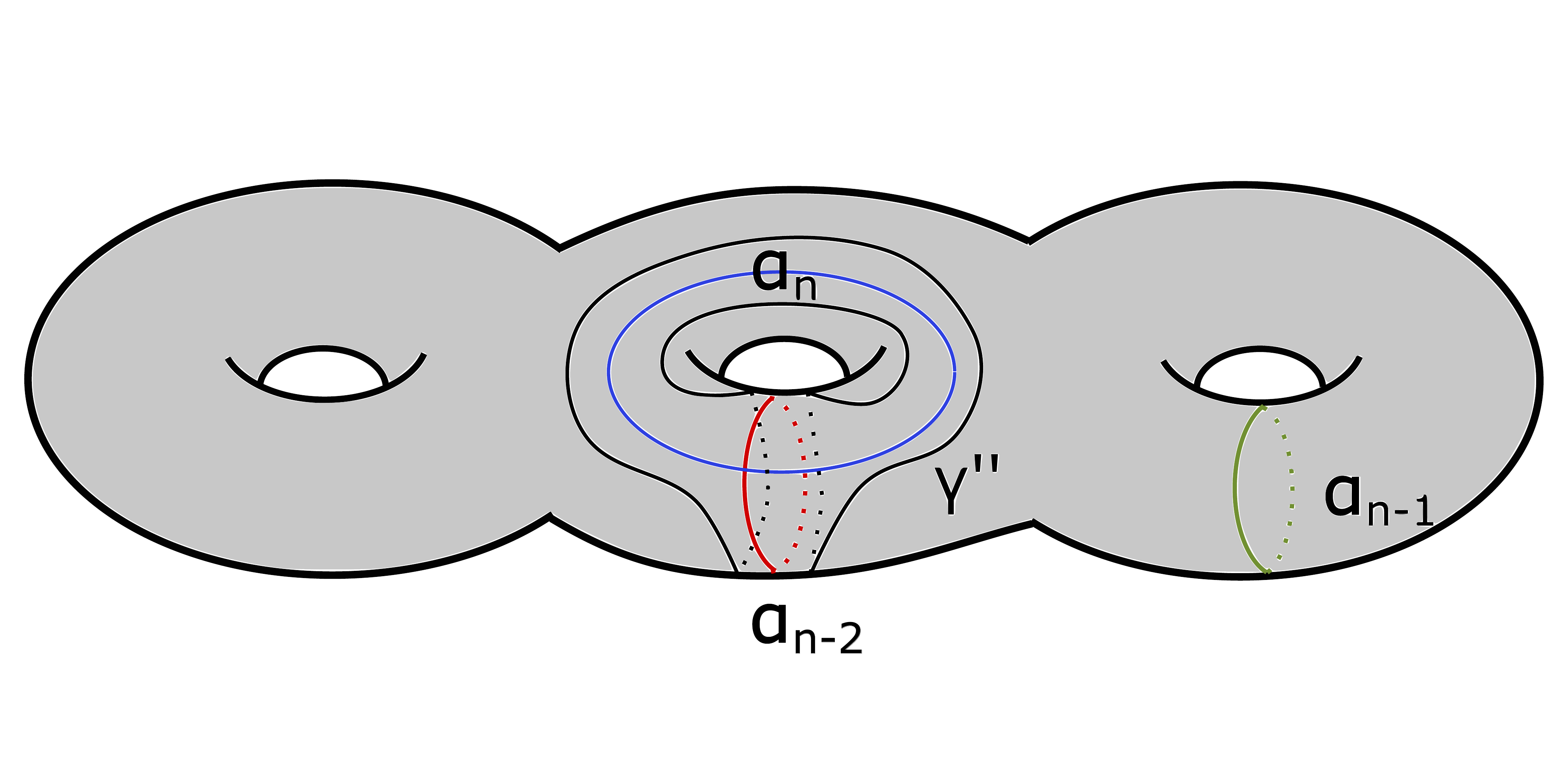}}
\vspace{-0cm}
\caption{Construction of $\gamma$, $\gamma''$ and $\alpha_{n}$.}
\label{construction}
\end{figure}

There exists a curve (not a meridian) $\alpha_n\in \mathcal{C}(S)$ with $|\alpha_{n}\cap \alpha_{n-2}| = 1$, $\alpha_n \cap \alpha_{n-1}=\varnothing$ and 
$\pi_{S'}(\alpha_n)=\gamma''$. By the triangle inequality, we have  
\begin{eqnarray*}
\diam_{\mathcal{C}(S')}(\pi_{S'}(\alpha_0), \pi_{S'}(\alpha_n)) &= &\diam_{\mathcal{C}(S')}(\pi_{S'}(\alpha_0),\gamma'') \\
  & \geq &
\diam_{\mathcal{C}(S')}(\pi_{S'}(\alpha_0),\gamma')-d_{\mathcal
{C}(S')}(\gamma'',\gamma') \\
& > & (2n+2)-2=2n.
\end{eqnarray*}
 Since $\alpha_{n}\cap \alpha_{n-2}=1$ and $\alpha_n\cap \alpha_{n-1}=\varnothing$, 
then $[\alpha_{n-2},\alpha_{n-1},\alpha_{n}]$ is a geodesic in $\mathcal{C}(S)$, see Fig.~\ref{construction}. By Proposition \ref{odd}, 
$[\alpha_0,\alpha_1,\cdots,\alpha_{n-1},\alpha_{n}]$ is a geodesic in 
$\mathcal{C}(S)$.

\end{proof}

\section{\textbf{Proof of Theorem \ref{compression}}}
\label{section: compression}
Let $S$ be a closed oriented surface, and $a$ is a simple closed curve on $S$. The compression body $S[a]$ is obtained from $S \times [0,1]$ by attaching a 2-handle along $a$ onto the boundary $S \times \{1\}$. The disk complex of $S[a]$ is described as follows.

\begin{Prop}\emph{(Biringer-Vlamis \cite{BV} Proposition 2.5)}\label{one meridian}
Suppose that $S$ is a closed, orientable surface and $a$ is a simple closed curve on $S$. If $S$ is a torus or $a$ is separating, 
$$\mathcal{D}(S[a])=\{a\},$$
while if the genus $g(S) \geq 2$ and $a$ is nonseparating, then 
\begin{eqnarray*}
\mathcal{D}(S[a])&=&\{a\} \cup \{B(a,b): b \in \mathcal{C}(S), i(a,b)=1\}\\
                           &=&\{a\} \cup \{\partial T: T \subset S, \text{a punctured torus with}\ a \subset T\}.
\end{eqnarray*}
\end{Prop}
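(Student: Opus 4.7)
The plan is to establish the two inclusions separately, concentrating on the harder nonseparating case with $g(S) \geq 2$.

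For the inclusion $\supseteq$: The curve $a$ bounds the standard meridian disk $E$ of $S[a]$, obtained by capping the annulus $a \times [0,1]$ by the core disk of the attached $2$-handle. For any $b \in \mathcal{C}(S)$ with $i(a,b)=1$, the regular neighborhood $T = N(a \cup b)$ is a punctured torus with $\pi_1(T) = \langle a, b \rangle$, in which $\partial T$ represents the commutator $[a,b]$. Because $a$ becomes null-homotopic in $S[a]$, so does $[a,b]$, and hence so does $\partial T = B(a,b)$. Papakyriakopoulos's Loop Theorem then yields an essential disk bounded by $\partial T$, giving the inclusion on the right-hand side. The equivalence of the two descriptions of this set is immediate: given $T \ni a$, the torus $T$ contains a dual curve $b$ with $i(a,b)=1$ and $T = N(a \cup b)$.

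For the inclusion $\subseteq$: Let $D$ be an essential compressing disk in $S[a]$ with $\partial D = b$, isotoped to minimize $|D \cap E|$. A standard innermost-disk argument using the irreducibility of $S[a]$ shows that $D \cap E$ consists only of arcs. I will argue that, unless $b = a$, one may take $D \cap E = \emptyset$: if the intersection is nonempty, an outermost arc $\alpha$ on $E$ cuts off a subdisk $E_0 \subset E$ with $\partial E_0 = \alpha \cup \beta_0$ and $\beta_0 \subset a$, and a band surgery of $D$ along $E_0$ produces two disks $F_{\pm}$ with $|F_\pm \cap E| < |D \cap E|$. Either both $F_\pm$ are inessential, in which case the $3$-ball they cobound with subdisks of $S$ yields an isotopy of $D$ that reduces $|D \cap E|$ and contradicts minimality, or at least one $F_\pm$ is essential, and one proceeds inductively on $|D \cap E|$.

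In the base case $D \cap E = \emptyset$, the disk $D$ lies in the product $P = \overline{S[a] \setminus N(E)} \cong \Sigma_{g-1} \times [0,1]$, where $\Sigma_{g-1}$ is obtained from $S$ by cutting along $a$ and capping the two resulting boundary circles with disks $E_+, E_-$. Since $P$ is a product, $\partial D$ is null-homotopic in $\Sigma_{g-1}$ and hence bounds a disk $\Delta$ there. Three subcases arise depending on how many of $E_\pm$ are contained in $\Delta$: if $\Delta$ avoids both, then $b$ is inessential in $S$, contradicting essentiality of $D$; if $\Delta$ contains exactly one, then $b$ is parallel to $a$, so $b = a$ in $\mathcal{C}(S)$; if $\Delta$ contains both, then $\Delta \setminus (E_+ \cup E_-)$ is a pair of pants in $S - N(a)$ whose regluing along $a$ assembles to a punctured torus $T \subset S$ with $a \subset T$ and $\partial T = b$, so $b = B(a,b')$ for any curve $b'$ dual to $a$ in $T$.

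The main obstacle is the reduction step showing $D \cap E = \emptyset$ can be achieved; once $D$ is made disjoint from $E$, the analysis in the product $P$ is essentially diagrammatic. The separating and torus cases are handled as degenerations of this framework: when $S$ is a torus, $S[a]$ is a solid torus with $a$ its unique meridian; when $a$ is separating, the product $P$ splits into two components and the pair-of-pants subcase of the $\Delta$-analysis cannot occur (since it requires $a$ to be nonseparating to reassemble nontrivially), so again $b = a$ is the only possibility.
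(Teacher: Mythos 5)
The paper itself does not prove this proposition; it is imported verbatim from Biringer--Vlamis \cite{BV}, so there is no internal proof to compare against. Your overall architecture is the standard one for this fact: the inclusion $\supseteq$ via the meridian disk and the band sum (your route through the commutator $[a,b]$ and Dehn's lemma works; alternatively one can simply tube two parallel copies of $E$ along $b$ and avoid Dehn's lemma altogether), and the inclusion $\subseteq$ by cutting $S[a]$ along $E$ and analyzing a disk inside the product $\partial_- S[a]\times[0,1]$. Your base-case analysis --- the disk $\Delta\subset\Sigma_{g-1}$ and the three subcases according to whether $\Delta$ contains neither, exactly one, or both of $E_\pm$ --- is correct and is exactly the heart of the classification, as is the treatment of the torus and separating cases as degenerations.

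The genuine gap is in the reduction step, which you yourself identify as the main obstacle but do not actually carry out: the claim that a $|D\cap E|$-minimal essential disk $D$ not isotopic to $E$ can be taken disjoint from $E$. After surgering $D$ along an outermost subdisk $E_0\subset E$ you obtain disks $F_\pm$ with $|F_\pm\cap E|<|D\cap E|$, but in the horn ``at least one $F_\pm$ is essential, proceed inductively'' the inductive hypothesis only tells you that $\partial F_+$ (say) lies in $\{a\}\cup\{B(a,b)\}$; it says nothing about $\partial D$. The curve $\partial D$ is recovered from $\partial F_+$ and $\partial F_-$ by a band sum along the arc $\beta_0\subset a$, and a band sum of two curves of the allowed form need not be of the allowed form (e.g.\ it can bound a genus-two subsurface), so no conclusion about $D$ follows, nor is any contradiction with minimality derived in that case; the other horn (``both $F_\pm$ inessential yields an intersection-reducing isotopy'') is also only asserted. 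The standard repair is to take an arc of $D\cap E$ outermost on $D$ rather than on $E$: the outermost subdisk $D_0\subset D$ has interior disjoint from $E$, hence lies in the product $P\cong\Sigma_{g-1}\times[0,1]$, so its boundary $\delta_0\cup\alpha$ bounds an embedded disk $\Delta_0\subset\Sigma_{g-1}\times\{0\}$, and one runs your three-case analysis on $\Delta_0$ itself: if $\Delta_0$ misses the second copy of $E$ one finds a bigon between $\partial D$ and $a$ (or an isotopy of $D$ across $E$) contradicting minimality, taken lexicographically in $\bigl(|\partial D\cap a|,\,|D\cap E|\bigr)$; if $\Delta_0$ contains it, the resulting isotopy again reduces the intersection. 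In short, the product structure must be exploited already at the level of the outermost piece of $D$, not deferred to an induction whose conclusion does not transfer back to $D$. With that step supplied (and the minor caveat that a punctured torus $T\supset a$ with $a$ boundary-parallel in $T$ has $\partial T$ isotopic to $a$, so the two descriptions of the disk set still coincide), the rest of your argument is sound.
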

The $B(a,b)$ is the \emph{band sum} of $a$ and $b$, that is, $B(a,b) = \partial N(a \cup b)$. Let $S^3=V\cup_{S} W$ be the standard genus $g \geq 3$ Heegaard splitting. Using Lemma \ref{lem: geodesics},  we can find two meridians in the two handlebodies $V$ and $W$ with exact distance. The proof of Theorem \ref{compression} is followed by a sequence of propositions.

\begin{Prop}
 For any integer $g \geq 3$, there exists a genus $g$ Heegaard splitting $V_0 \cup_S W_0$ with distance $n-2$, where $n$ is any positive integer divisible by 4. The $V_0$ and $W_0$ are compression bodies and $V_0 \cup_S W_0$ can be embedded in $S^3$.
\end{Prop}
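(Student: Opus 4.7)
The plan is to apply Lemma \ref{lem: geodesics} to obtain a length-$n$ geodesic $[\alpha_0,\dots,\alpha_n]$ in $\mathcal{C}(S)$ with nonseparating $V$-meridians at the two ends, and then to shorten by one edge at each end via a band sum so that the new endpoints $\beta_0, \beta_n$ are \emph{separating} meridians---$\beta_0$ of $V$ and $\beta_n$ of $W$. Setting $V_0 := S[\beta_0]$ and $W_0 := S[\beta_n]$, Proposition \ref{one meridian} (separating clause) forces $\mathcal{D}(V_0) = \{\beta_0\}$ and $\mathcal{D}(W_0) = \{\beta_n\}$, so $d(V_0, W_0) = d_{\mathcal{C}(S)}(\beta_0, \beta_n)$, and the whole task reduces to showing this distance equals $n-2$.

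Concretely, Lemma \ref{lem: geodesics} (with $n$ divisible by $4$) supplies $\alpha_0, \alpha_n \in \mathcal{D}(V)$ with $|\alpha_0 \cap \alpha_2| = |\alpha_{n-2} \cap \alpha_n| = 1$. Since I need $\beta_n$ to be a meridian of $W$ rather than $V$ (so that $W_0$ embeds in $W \subset S^3$), I first adapt the very last step of the inductive construction inside Lemma \ref{lem: geodesics}: on the subsurface $X_{n-2}$, replace the filling pair of $V$-meridians by a filling pair of $W$-meridians, whose existence follows from the dual of Figures~\ref{nonmeridian}-\ref{meridian} under the involution of $S^3$ swapping $V$ and $W$. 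Thurston's construction then yields a pseudo-Anosov extending over $W$, whose iterates satisfy the subsurface-projection hypothesis of Proposition \ref{even}, and the extended path remains a geodesic. I then set
\[
\beta_0 := \partial N(\alpha_0 \cup \alpha_2), \qquad \beta_n := \partial N(\alpha_{n-2} \cup \alpha_n).
\]
Both are separating simple closed curves (each bounds a one-holed torus containing a pair of curves meeting once), and by the band-sum clause of Proposition \ref{one meridian}, applied to the nonseparating meridians $\alpha_0 \in \mathcal{D}(V)$ and $\alpha_n \in \mathcal{D}(W)$, I obtain $\beta_0 \in \mathcal{D}(V)$ and $\beta_n \in \mathcal{D}(W)$. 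The 2-handles defining $V_0, W_0$ can then be chosen inside $V, W$ respectively, so $V_0 \cup_S W_0$ embeds in $S^3$.

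For the distance, the sequence $\beta_0, \alpha_2, \alpha_3, \dots, \alpha_{n-2}, \beta_n$ is a path of length $n-2$, since $\beta_0$ is disjoint from $\alpha_2$ and $\beta_n$ from $\alpha_{n-2}$; this gives $d(\beta_0, \beta_n) \leq n-2$. Conversely, $\beta_0$ is disjoint from $\alpha_0$ and $\beta_n$ from $\alpha_n$, so $d(\alpha_0, \beta_0), d(\alpha_n, \beta_n) \leq 1$, and the triangle inequality together with $d_{\mathcal{C}(S)}(\alpha_0, \alpha_n) = n$ yields $d(\beta_0, \beta_n) \geq n-2$. The hardest step is the endpoint modification described above: verifying that the dual Thurston construction on $X_{n-2}$ with a $W$-meridian filling pair is compatible with the $V$-meridian geodesic $[\alpha_0,\dots,\alpha_{n-3}]$ already built, and still delivers the $\geq 4n$ diameter bound needed by Proposition \ref{even}. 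This ultimately rests on the $V \leftrightarrow W$ symmetry of the standard Heegaard splitting of $S^3$, which transports the construction of Lemma \ref{lem: geodesics} verbatim to the handlebody $W$.
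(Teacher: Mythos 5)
Your overall skeleton (band-sum at the two ends, uniqueness of the meridian in $S[\beta]$ via Proposition \ref{one meridian}, the path of length $n-2$ through $\alpha_2,\dots,\alpha_{n-2}$ for the upper bound, and the triangle inequality against $d_{\mathcal{C}(S)}(\alpha_0,\alpha_n)=n$ for the lower bound) is exactly the paper's argument. But the step you yourself flag as the hardest one --- modifying the last stage of Lemma \ref{lem: geodesics} so that $\alpha_n\in\mathcal{D}(W)$ --- has a genuine gap, and it is not a technicality. In the inductive construction, $\alpha_n=\varphi_{n-2}(\alpha_{n-4})$ where $\alpha_{n-4}$ is a meridian of $V$. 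If you replace $\varphi_{n-2}$ by a pseudo-Anosov extending over $W$, then $\varphi_{n-2}$ preserves $\mathcal{D}(W)$, not $\mathcal{D}(V)$, so there is no reason at all for $\varphi_{n-2}(\alpha_{n-4})$ to bound a disk in $W$; your appeal to the band-sum clause with ``$\alpha_n\in\mathcal{D}(W)$'' is therefore unjustified, and without it $W_0=S[\beta_n]$ need not sit inside $W$, which kills both the embedding in $S^3$ and the claim $\mathcal{D}(W_0)=\{\beta_n\}$ as the disk set of a $W$-side compression body. Worse, the ``dual'' Thurston construction may not even be performable: it needs a filling pair of $W$-meridians inside $X_{n-2}=\overline{S-N(\alpha_{n-2})}$, i.e.\ $W$-meridians disjoint from $\alpha_{n-2}$; but $\alpha_{n-2}$ is the image of $\alpha_2$-type curves under high powers of pseudo-Anosov maps chosen to extend over $V$, and nothing guarantees it is disjoint from (or meets exactly once) any $W$-meridian. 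The $V\leftrightarrow W$ involution of the standard splitting does not rescue this, because the geodesic built up to $\alpha_{n-2}$ is adapted to $V$ and is not symmetric under that involution, so Figures \ref{nonmeridian}--\ref{meridian} do not ``transport verbatim'' to $W$ at that stage.

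The paper avoids this entirely by distributing the two compression bodies the other way around: it uses the geodesic of Lemma \ref{lem: geodesics} unmodified and exploits that in the initial configuration $\alpha_2$ is already a meridian of $W$. Hence $\beta=\partial N(\alpha_0\cup\alpha_2)$ is a meridian of $W$ (band sum of the $W$-meridian $\alpha_2$ with $\alpha_0$), while $\gamma=\partial N(\alpha_{n-2}\cup\alpha_n)$ is a meridian of $V$ (band sum of the $V$-meridian $\alpha_n$ with $\alpha_{n-2}$), so one sets $W_0=S[\beta]\subset W$ at the $\alpha_0$-end and $V_0=S[\gamma]\subset V$ at the $\alpha_n$-end; no alteration of the pseudo-Anosov maps and no hypothesis check for Proposition \ref{even} beyond what the lemma already provides is needed. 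If you reassign your compression bodies in this way, the rest of your distance computation goes through verbatim.
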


\begin{proof}
 Suppose a positive integer $n$ is divisible by 4, let $[\alpha_0,\alpha_1,\cdots,\alpha_{n-1},\alpha_{n}]$ be a geodesic  constructed in Lemma \ref{lem: geodesics}, then $|\alpha_0\cap 
\alpha_2|=1$, $|\alpha_{n-2}\cap \alpha_n|=1$ and $\alpha_n$ is a meridian of $V$.  Let $\beta=\partial 
N(\alpha_0\cup \alpha_2)$. Since $|\alpha_0\cap \alpha_2|=1$ and $\alpha_2$ is a 
meridian of the handlebody $W$, then $\beta$ is a meridian of $W$, and it is a meridian of $V$ as well. Similarly, $\gamma=\partial 
N(\alpha_{n-2}\cup \alpha_n)$ is a meridian of $V$. So we have the following diagram. 

$
 \xymatrix{
& &\alpha_0 \ar@{-} [r]  \ar@/^1pc/[rr]^{\beta} & \alpha_1 \ar@{-} [r] & \alpha_2 \ar@{-} [r]  & \alpha_3 \ar@{. }[r] & \alpha_{n-2} \ar@{-} [r]  \ar@/^1pc/[rr]^{\gamma}& \alpha_{n-1} \ar@{-} [r] & \alpha_n.
}
$

Take the trivial compression body $S \times [0,1]$, where $S$ is the Heegaard surface in the standard Heegaard splitting. 
Let $W_0 = S[\beta]$ and $V_0 = S[\gamma]$, then $V_0 \subset V$ and $W_0 \subset W$, and $V_0 \cup_S W_0 \subset V \cup_S W$ can be embedded in $S^3$.
Note that the geodesic segment 
$[\alpha_2,\cdots,\alpha_{n-2}]$ has distance $n-4$ in curve graph 
$\mathcal{C}(S)$. By the triangle inequality,
 $$d_{\mathcal{C}(S)}(\beta,\gamma)\leq 
d_{\mathcal{C}(S)}(\beta,\alpha_2)+d_{\mathcal{C}(S)}(\alpha_2,\alpha_{n-2})+d_{
\mathcal{C}(S)}(\alpha_{n-2},\gamma)=1+(n-4)+1=n-2.$$

On the other hand, by Proposition \ref{one meridian}, $\beta$ is the unique meridian in $W_0$, and $\gamma$ is a unique meridian in $V_0$. Since 
$d_{\mathcal{C}(S)}(\alpha_0,\beta) = 1$ and 
$d_{\mathcal{C}(S)}(\gamma,\alpha_n) = 1$, then 
 $$d_{\mathcal{C}(S)}(\alpha_0,\alpha_n)\leq 
d_{\mathcal{C}(S)}(\alpha_0,\beta)+d_{\mathcal{C}(S)}(\beta,
\gamma)+d_{\mathcal{C}(S)}(\gamma,\alpha_n)\leq 
1+d_{\mathcal{C}(S)}(\beta,\gamma)+1.$$
 It follows that
 $$d_{\mathcal{C}(S)}(\beta,\gamma)\geq 
d_{\mathcal{C}(S)}(\alpha_0,\alpha_n)-2=n-2.$$
Then we have 
$d_{\mathcal{C}(S)}(V_0,W_0)= n-2$.   
\end{proof}

\begin{Prop}
For any integer $g \geq 3$, there exists a genus $g$ Heegaard splitting $V_0 \cup_S W_0$ with distance $n$, where $n$ is any positive integer divisible by 4. The $V_0$ and $W_0$ are compression bodies and $V_0 \cup_S W_0$ can be embedded in $S^3$.
\end{Prop}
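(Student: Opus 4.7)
My plan is to reuse the band-sum construction from the previous proposition, but starting from a geodesic of length $n+2$ rather than $n$. The previous proof replaced the endpoints $\alpha_0,\alpha_L$ of a length-$L$ geodesic with the separating band sums $\partial N(\alpha_0\cup\alpha_2)$ and $\partial N(\alpha_{L-2}\cup\alpha_L)$, which costs exactly $2$ in the triangle inequality and yields a splitting of distance $L-2$. To hit distance $n$ exactly, I simply take $L=n+2$; since Lemma \ref{lem: geodesics} produces a geodesic of any positive length this is allowed, and since $n\equiv 0 \pmod 4$ we have $\alpha_0$ and $\alpha_n$ both meridians of $V$, while the base-case choice in the Lemma makes $\alpha_2$ a meridian of $W$ (as invoked in the proof of the previous proposition).

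Concretely, I would apply Lemma \ref{lem: geodesics} to obtain a geodesic $[\alpha_0,\alpha_1,\dots,\alpha_{n+2}]$ in $\mathcal{C}(S)$ and define
\[
\beta=\partial N(\alpha_0\cup\alpha_2),\qquad \gamma=\partial N(\alpha_n\cup\alpha_{n+2}),
\]
both separating, each being the band sum of two curves meeting in a single point. The cut-and-paste band-sum argument from the previous proposition shows that $\beta$ is a meridian of $W$ (because $\alpha_2$ is) and $\gamma$ is a meridian of $V$ (because $\alpha_n$ is). Setting $W_0=S[\beta]\subset W$ and $V_0=S[\gamma]\subset V$, the union $V_0\cup_S W_0$ embeds in $V\cup_S W=S^3$, and Proposition \ref{one meridian} gives $\mathcal{D}(W_0)=\{\beta\}$ and $\mathcal{D}(V_0)=\{\gamma\}$. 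The upper bound on the distance
\[
d_{\mathcal{C}(S)}(\beta,\gamma)\leq d(\beta,\alpha_2)+d(\alpha_2,\alpha_n)+d(\alpha_n,\gamma)=1+(n-2)+1=n
\]
is immediate, while the lower bound uses the geodesic identity $d(\alpha_0,\alpha_{n+2})=n+2$ together with $d(\alpha_0,\beta)=d(\alpha_{n+2},\gamma)=1$ to force $d(\beta,\gamma)\geq n$.

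There is essentially no novel obstacle: every subclaim is either a direct invocation of Lemma \ref{lem: geodesics} and Proposition \ref{one meridian} or an exact repetition of an argument from the previous proposition's proof. The only mild point of substance is the band-sum/meridian claim --- that the band sum of a handlebody meridian with any curve meeting it once is again a meridian of the same handlebody --- which is justified by cutting the handlebody along the relevant meridian disk so that the second curve becomes a properly embedded arc on the new boundary; a neighborhood of this arc together with the two capping disks is a disk in the lower-genus handlebody whose boundary is the band sum, and this disk persists in the original handlebody.
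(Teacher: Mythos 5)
Your proof is correct, but it takes a different (and in fact slightly more economical) route than the paper. The paper does not feed $n+2$ into Lemma \ref{lem: geodesics} directly; instead it keeps the length-$n$ geodesic $[\alpha_0,\dots,\alpha_n]$ with $\gamma=\partial N(\alpha_{n-2}\cup\alpha_n)$ as in the previous proposition, and extends at the \emph{other} end by building a second segment $[\beta_0=\alpha_2,\beta_1\neq\alpha_1,\beta_2=\alpha_0,\beta_3,\beta_4]$ via the same pseudo-Anosov construction, taking $\beta=\partial N(\beta_2\cup\beta_4)$; its lower bound then rests on the concatenation $[\beta_4,\beta_3,\alpha_0,\alpha_1,\dots,\alpha_n]$ being a geodesic of length $n+2$, which is asserted rather than spelled out. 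You instead take a single length-$(n+2)$ geodesic from Lemma \ref{lem: geodesics} (allowed, since the lemma works for any positive length, and $n+2\equiv 2\pmod 4$ is harmless) and band both ends, using the symmetry of the band sum: $\gamma=\partial N(\alpha_n\cup\alpha_{n+2})$ is a meridian of $V$ because $\alpha_n$ is, even though $\alpha_{n+2}$ is not, and $\beta=\partial N(\alpha_0\cup\alpha_2)$ is a meridian of $W$ because $\alpha_2$ is. This buys you a cleaner lower bound ($d(\alpha_0,\alpha_{n+2})=n+2$ is immediate from the lemma, no two-sided extension to justify), and it makes visible that the previous proposition's argument actually yields every even distance at once; what the paper's route buys is that the far-end data $(\alpha_{n-2},\alpha_n,\gamma)$ stays literally the same across all four propositions, matching the later odd-length and $n+1$ constructions. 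Note that both your argument and the paper's rely on the same fact not recorded in the statement of Lemma \ref{lem: geodesics}, namely that in its base configuration $\alpha_2$ is a meridian of $W$; you flag this correctly, so your proof is at the same level of rigor as the paper on that point. The remaining ingredients --- essentiality and separation of $\beta,\gamma$ (each bounds a one-holed torus, essential since $g\geq 3$), uniqueness of the meridian in $S[\beta]$ and $S[\gamma]$ via Proposition \ref{one meridian}, and the embedding $S[\gamma]\subset V$, $S[\beta]\subset W$ --- are used exactly as in the paper.
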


\begin{proof}
 The geodesic $[\alpha_0,\alpha_1,\alpha_2]$ consists of the $\alpha_0$ as a 
meridian of $V$ and $\alpha_2$ as a meridian of $W$. Let 
$[\beta_0,\beta_1,\beta_2]$ be a geodesic in the curve graph $\mathcal{C}(S)$ 
such that $\beta_0=\alpha_2$ and $\beta_2=\alpha_0$, but $\beta_1\neq 
\alpha_1$, see Fig.~\ref{genus3}.

\begin{figure}[ht]
\vspace{-0.2cm}
\scalebox{.20}{\includegraphics[origin=c]{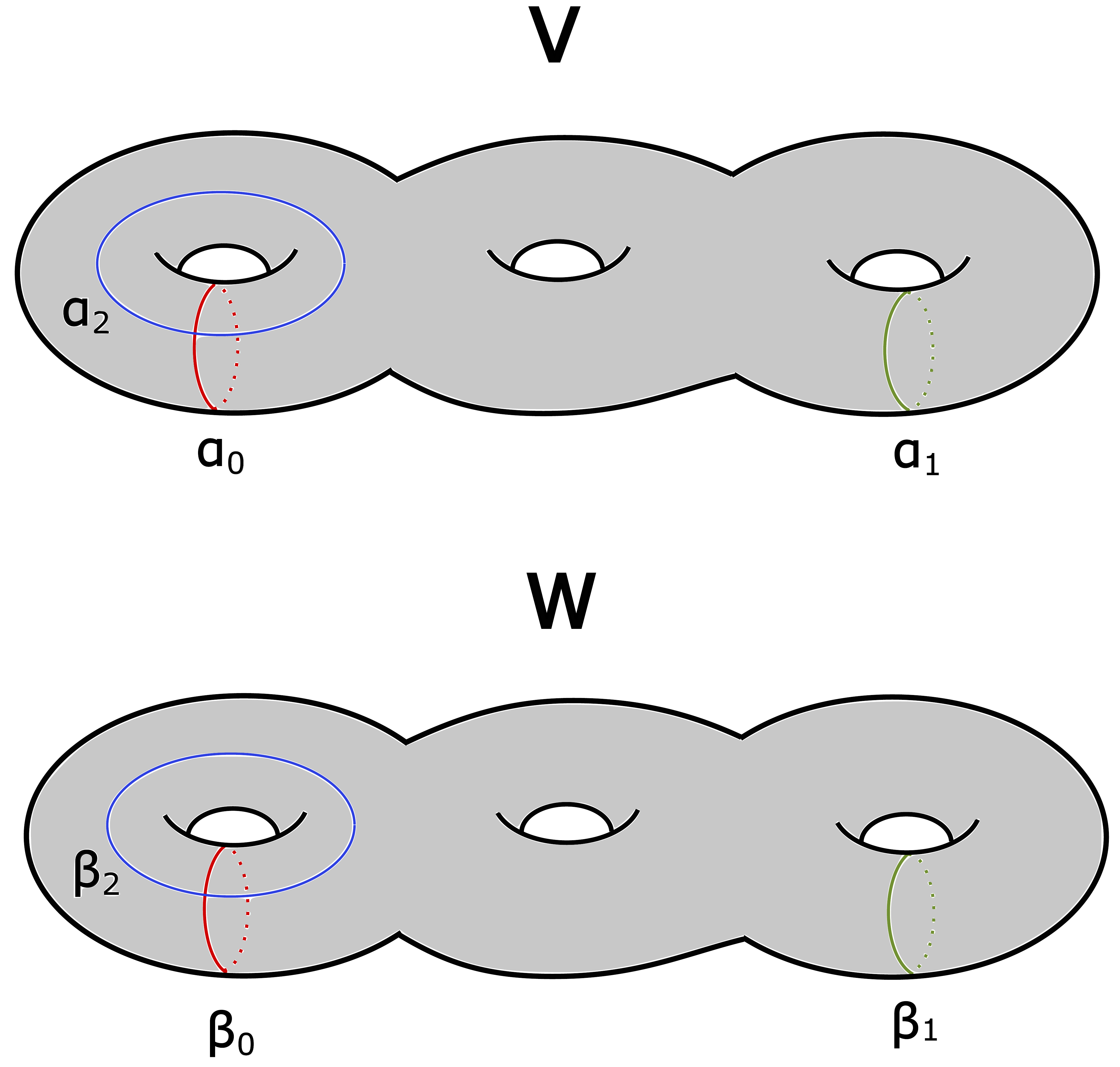}}
\vspace{-0cm}
\caption{ $S^3= V  \cup_S W$ is the standard Heegaard splitting. $\beta_0=\alpha_2$, $\beta_2=\alpha_0$ and $\beta_1\neq 
\alpha_1$.}
\label{genus3}
\end{figure}

By the same construction, one will be able to extend the geodesic $[\beta_0,\beta_1,\beta_2]$ to 
$[\beta_0,\beta_1,\beta_2,\beta_3,\beta_4]$ with $|\beta_0\cap \beta_2|=1$ and 
$|\beta_2\cap \beta_4|=1$.  Since $\alpha_0 = \beta_2$ and $\alpha_2 = \beta_0$, then $\partial N(\alpha_0\cup 
\alpha_2)=\partial N(\beta_0\cup \beta_2)$. Choose the geodesic 
$[\alpha_0,\alpha_1,\cdots,\alpha_{n-1},\alpha_{n}]$ as before with $n$ 
divisible by 4, and let $\beta=\partial 
N(\beta_2\cup \beta_4)$, $\gamma=\partial N(\alpha_{n-2}\cup \alpha_n)$, 
then 

$
 \xymatrix{
& &\alpha_0 \ar@{-} [r]\ar@{-} [d]^= & \alpha_1 \ar@{-} [r] \ar@{-} [d]^{\neq} & \alpha_2 \ar@{-} [r]  \ar@{-} [d]^=& \alpha_3 \ar@{. }[r] & \alpha_{n-2} \ar@{-} [r]  \ar@/^1pc/[rr]^{\gamma}& \alpha_{n-1} \ar@{-} [r] & \alpha_n\\
 \beta_4 \ar@/_1pc/[rr]_{\beta} & \ar@{-} [l] \beta_3 & \beta_2 \ar@{-} [l] & \ar@{-} [l] \beta_1& \ar@{-} [l] \beta_0&&&&
}
$
 is a path in the curve graph such that

 $$d_{\mathcal{C}(S)}(\beta,\gamma)\leq 
d_{\mathcal{C}(S)}(\beta,\beta_2=\alpha_0)+d_{\mathcal{C}(S)}(\alpha_0,\alpha_{
n-2})+d_{\mathcal{C}(S)}(\alpha_{n-2},\gamma)=1+(n-2)+1=n.$$

Let $W_0 = S[\beta]$ and $V_0 = S[\gamma]$. By Proposition \ref{one meridian}, $\beta$ is the unique meridian in $W_0$, and $\gamma$ is the unique meridian in $V_0$. Since $d_{\mathcal{C}(S)}(\beta_4,\beta) = 1$ and 
$d_{\mathcal{C}(S)}(\gamma,\alpha_n) = 1$, it follows that 
 $$d_{\mathcal{C}(S)}(\beta_4,\alpha_n)\leq 
d_{\mathcal{C}(S)}(\beta_4,\beta)+d_{\mathcal{C}(S)}(\beta,
\gamma)+d_{\mathcal{C}(S)}(\gamma,\alpha_n)\leq 
1+d_{\mathcal{C}(S)}(\beta,\gamma)+1.$$
Rearrange the inequality, we obtain
 $$d_{\mathcal{C}(S)}(\beta,\gamma)\geq 
d_{\mathcal{C}(S)}(\beta_4,\alpha_n)-2=(n+2)-2=n.$$
Then we have 
$d_{\mathcal{C}(S)}(V_0,W_0)= n$.
\end{proof}

To sum up, the previous two propositions prove Theorem \ref{compression} for all even natural numbers. In the following, we want to show it also holds for the odd natural number $n \geq 3$. 

\begin{Prop}
For any integer $g \geq 3$, there exists a genus $g$ Heegaard splitting $V_0 \cup_S W_0$ with distance $n-1$, where $n$ is any positive integer divisible by 4. The $V_0$ and $W_0$ are compression bodies and $V_0 \cup_S W_0$ can be embedded in $S^3$.
\end{Prop}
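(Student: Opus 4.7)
The plan is to parallel the proofs of the previous two propositions but to use the odd-length extension from Lemma \ref{lem: geodesics} so that the ambient geodesic has odd length $n+1$. Taking separating band-sum meridians at each end will then shorten the distance by exactly $2$, yielding the desired odd distance $n-1$.

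Concretely, let $S^3 = V \cup_S W$ be the standard genus-$g$ Heegaard splitting, and apply Lemma \ref{lem: geodesics} with upper index $n+1$ (which is odd, since $n$ is divisible by $4$) to produce a geodesic $[\alpha_0, \alpha_1, \ldots, \alpha_{n+1}]$ in $\mathcal{C}(S)$. As at the beginning of the previous constructions, arrange $\alpha_0$ and $\alpha_1$ to be meridians of $V$ and $\alpha_2$ to be a meridian of $W$ with $|\alpha_0 \cap \alpha_2| = 1$; this is possible because the meridians of $V$ and $W$ can be chosen as a dual system in the standard Heegaard splitting. The lemma then guarantees that $\alpha_{n-1}$ is a meridian of $V$ (since $n-1$ is odd and $n-1 < n+1$) and that $|\alpha_{n-1} \cap \alpha_{n+1}| = 1$ (the odd-length clause of the lemma). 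Define the separating curves
\begin{equation*}
\beta = \partial N(\alpha_0 \cup \alpha_2), \qquad \gamma = \partial N(\alpha_{n-1} \cup \alpha_{n+1}),
\end{equation*}
which are meridians of $W$ and $V$, respectively, each arising as a band sum of a meridian with a curve meeting it once. Setting $W_0 = S[\beta]$ and $V_0 = S[\gamma]$, the fact that $\beta$ bounds a disk in $W$ and $\gamma$ bounds a disk in $V$ lets us embed $V_0 \subset V$ and $W_0 \subset W$, so $V_0 \cup_S W_0$ embeds in $S^3$.

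Because $\beta$ and $\gamma$ are separating, Proposition \ref{one meridian} gives $\mathcal{D}(V_0) = \{\gamma\}$ and $\mathcal{D}(W_0) = \{\beta\}$, so the Heegaard distance reduces to $d_{\mathcal{C}(S)}(\beta, \gamma)$. For the upper bound, the triangle inequality along $\beta, \alpha_2, \alpha_{n-1}, \gamma$ gives $1 + (n-3) + 1 = n-1$. For the lower bound, $d_{\mathcal{C}(S)}(\alpha_0, \alpha_{n+1}) = n+1$ together with $d_{\mathcal{C}(S)}(\alpha_0, \beta) = 1$ and $d_{\mathcal{C}(S)}(\gamma, \alpha_{n+1}) = 1$ yields $d_{\mathcal{C}(S)}(\beta, \gamma) \geq n-1$.

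The main obstacle, and the reason Lemma \ref{lem: geodesics} emphasizes the intersection condition $|\alpha_{n+1} \cap \alpha_{n-1}| = 1$ in its odd case, is to confirm that the rightmost band sum $\gamma$ is a meridian of $V$: the final curve $\alpha_{n+1}$ produced by the odd-length extension is not itself a meridian, but its once-intersection with the meridian $\alpha_{n-1}$ is exactly what is needed for $\gamma$ to bound a disk in $V$ (by sliding the disk bounded by $\alpha_{n-1}$ along a bicollar of $\alpha_{n+1}$). Once this is in place, both ends of the geodesic are contracted by exactly one step to yield separating meridians, producing the exact Heegaard distance $n-1$.
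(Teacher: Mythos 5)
Your proposal is correct and follows essentially the same route as the paper's proof: apply Lemma \ref{lem: geodesics} with odd length $n+1$, form the band sums $\beta=\partial N(\alpha_0\cup\alpha_2)$ and $\gamma=\partial N(\alpha_{n-1}\cup\alpha_{n+1})$, set $W_0=S[\beta]$, $V_0=S[\gamma]$, and use Proposition \ref{one meridian} together with the two triangle inequalities to pin the distance at $n-1$. Your added remarks (why $\gamma$ bounds a disk in $V$, why the splitting embeds in $S^3$) only spell out steps the paper states without elaboration.
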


\begin{proof}
Using Lemma \ref{lem: geodesics}, we can find a geodesic $[\alpha_0,\alpha_1,\cdots,\alpha_{n},\alpha_{n+1}]$ such that the geodesic segment $[\alpha_0,\alpha_1,\cdots,\alpha_{n}]$ is of length $n$ divisble by 4, with $|\alpha_0\cap \alpha_2|=1$ and $|\alpha_{n-1}\cap \alpha_{n+1}|=1$. Let $\beta = \partial N(\alpha_0\cup \alpha_2)$, and $\gamma=\partial N(\alpha_{n-1}\cup \alpha_{n+1})$, then $\beta$ is the meridian of $W$ and $\gamma$ is a meridian of $V$. The reason is that $\alpha_2$ is a meridian of $W$ and $\alpha_{n-1}$ is a meridian of $V$. We can construct the compression bodies $W_0 = S[\beta]$ and $V_0 = S[\gamma]$. The diagram illustrates the path

$
 \xymatrix{
& &\alpha_0 \ar@{-} [r]  \ar@/^1pc/[rr]^{\beta} & \alpha_1 \ar@{-} [r] & \alpha_2 \ar@{-} [r]  & \alpha_3 \ar@{. }[r] & \alpha_{n-1} \ar@{-} [r] \ar@/^1pc/[rr]^{\gamma} & \alpha_{n} \ar@{-} [r] & \alpha_{n+1}.
}
$
Then, 
$$d_{\mathcal{C}(S)}(\beta,\gamma)\leq 
d_{\mathcal{C}(S)}(\beta,\alpha_2)+d_{\mathcal{C}(S)}(\alpha_2,\alpha_{n-1})+d_{
\mathcal{C}(S)}(\alpha_{n-1},\gamma)=1+(n-3)+1=n-1.$$

By Proposition \ref{one meridian}, $\beta$ is the unique meridian in $W_0$, and $\gamma$ is the unique meridian in $V_0$.  Since
$d_{\mathcal{C}(S)}(\alpha_0,\beta) = 1$ and 
$d_{\mathcal{C}(S)}(\gamma,\alpha_{n+1}) = 1$. It follows that 
 $$d_{\mathcal{C}(S)}(\alpha_0,\alpha_{n+1})\leq 
d_{\mathcal{C}(S)}(\alpha_0,\beta)+d_{\mathcal{C}(S)}(\beta,
\gamma)+d_{\mathcal{C}(S)}(\gamma,\alpha_{n+1})\leq 
1+d_{\mathcal{C}(S)}(\beta,\gamma)+1.$$
 Then we have 
 $$d_{\mathcal{C}(S)}(\beta,\gamma)\geq 
d_{\mathcal{C}(S)}(\alpha_0,\alpha_{n+1})-2=(n+1)-2=n-1.$$
Hence,
$d_{\mathcal{C}(S)}(V_0,W_0)= n-1$.
\end{proof}

\begin{Prop}
For any integer $g \geq 3$, there exists a genus $g$ Heegaard splitting $V_0 \cup_S W_0$ with distance $n+1$, where $n$ is any positive integer divisible by 4. The $V_0$ and $W_0$ are compression bodies and $V_0 \cup_S W_0$ can be embedded in $S^3$.
\end{Prop}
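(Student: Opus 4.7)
The plan is to mimic the construction of the previous proposition (distance $n-1$), but starting instead from a geodesic of length $n+3$. Since $n$ is divisible by $4$, the length $n+3$ is odd, so the odd-length case of Lemma \ref{lem: geodesics} applies. Band-summing at the two ends will cut the achievable distance by exactly $2$, yielding $n+1$.

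First I would invoke Lemma \ref{lem: geodesics} to obtain a geodesic $[\alpha_0, \alpha_1, \ldots, \alpha_{n+3}]$ of length $n+3$ in $\mathcal{C}(S)$. The lemma guarantees that $\alpha_0$ and $\alpha_1$ are meridians of the handlebody $V$, that $|\alpha_0 \cap \alpha_2| = 1$, that $\alpha_{n+1}$ is a meridian of $V$ (since $n+1$ is odd and strictly less than the length $n+3$), and that $|\alpha_{n+1} \cap \alpha_{n+3}| = 1$. Following the convention of the first proposition, I would also arrange for $\alpha_2$ to be a meridian of $W$ in the standard Heegaard splitting $S^3 = V \cup_S W$; this is compatible with the lemma's construction because $\alpha_2$ can be chosen as the dual curve to $\alpha_0$ on the Heegaard surface, which bounds a disk in the complementary handlebody.

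Next I would define $\beta = \partial N(\alpha_0 \cup \alpha_2)$ and $\gamma = \partial N(\alpha_{n+1} \cup \alpha_{n+3})$. Because $\alpha_2$ is a meridian of $W$ meeting $\alpha_0$ once, $\beta$ is a separating meridian of $W$; symmetrically, $\gamma$ is a separating meridian of $V$. Setting $W_0 = S[\beta]$ and $V_0 = S[\gamma]$, the union $V_0 \cup_S W_0$ embeds in $V \cup_S W = S^3$. By Proposition \ref{one meridian}, $\beta$ is the unique meridian of $W_0$ and $\gamma$ is the unique meridian of $V_0$, so the Heegaard distance reduces to computing $d_{\mathcal{C}(S)}(\beta, \gamma)$.

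For the upper bound, the path $\beta, \alpha_2, \alpha_3, \ldots, \alpha_{n+1}, \gamma$ gives $d_{\mathcal{C}(S)}(\beta, \gamma) \leq 1 + (n-1) + 1 = n+1$. For the lower bound, since $[\alpha_0, \ldots, \alpha_{n+3}]$ is a geodesic of length $n+3$ and $d_{\mathcal{C}(S)}(\alpha_0, \beta) = d_{\mathcal{C}(S)}(\alpha_{n+3}, \gamma) = 1$, the triangle inequality yields $d_{\mathcal{C}(S)}(\beta, \gamma) \geq (n+3) - 2 = n+1$. The main subtlety is verifying that the lemma's iterative construction can be arranged so that the dual choice of $\alpha_2$ as a meridian of $W$ is preserved while the meridian and intersection constraints at the far end $\{\alpha_{n+1}, \alpha_{n+3}\}$ are simultaneously satisfied; this is essentially bookkeeping, since the two ends of the geodesic are extended independently and the only relevant conditions at the $\alpha_0$-end are those governing $[\alpha_0, \alpha_1, \alpha_2]$, which are already in place from the base step of the lemma.
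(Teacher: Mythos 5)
Your proof is correct, and it reaches the same conclusion by a slightly different assembly than the paper. The paper keeps its uniform ``extend by two'' pattern: it reuses the length-$(n+1)$ geodesic $[\alpha_0,\ldots,\alpha_{n+1}]$ from the distance-$(n-1)$ proposition, splices on two new vertices $\beta_3,\beta_4$ at the $\alpha_0$-end (with $\beta_0=\alpha_2$, $\beta_2=\alpha_0$, $\beta_1\neq\alpha_1$, exactly as in the distance-$n$ case, where $\beta_4$ is a meridian of $W$ because the extending pseudo-Anosov is chosen to extend over $W$), and band-sums $\beta=\partial N(\beta_2\cup\beta_4)$ and $\gamma=\partial N(\alpha_{n-1}\cup\alpha_{n+1})$. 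You instead invoke Lemma \ref{lem: geodesics} once for a geodesic of odd length $n+3$ and band-sum at the two ends, $\beta=\partial N(\alpha_0\cup\alpha_2)$ and $\gamma=\partial N(\alpha_{n+1}\cup\alpha_{n+3})$; the lemma does supply everything you need at the far end ($\alpha_{n+1}$ a meridian of $V$ since $n+1$ is odd and less than $n+3$, and $|\alpha_{n+1}\cap\alpha_{n+3}|=1$), and your use of $\alpha_2$ as a meridian of $W$ is exactly the fact the paper itself invokes in its distance-$(n-2)$ proposition, since $\alpha_2$ is the fixed dual curve of the initial configuration and is never altered by the iteration. Your route is more self-contained (no dependence on the two preceding propositions or on re-justifying that the spliced path is a geodesic via Proposition \ref{even}), while the paper's route buys a uniform presentation across its four cases and keeps the $W$-meridian at the new end under explicit control via a pseudo-Anosov extending over $W$. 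The distance computation (upper bound $1+(n-1)+1$ through $\alpha_2,\ldots,\alpha_{n+1}$, lower bound $(n+3)-2$ from the geodesic endpoints, and uniqueness of the separating meridians via Proposition \ref{one meridian}) is the same in both.
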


\begin{proof}
The proof is similar as the even case. Take the geodesic in the above proposition and extend the geodesic on the other end by distance 2.  The following path between $\beta$ and $\gamma$ realizes the distance. 

$
 \xymatrix{
& &\alpha_0 \ar@{-} [r]\ar@{-} [d]^= & \alpha_1 \ar@{-} [r] \ar@{-} [d]^{\neq} & \alpha_2 \ar@{-} [r]  \ar@{-} [d]^=& \alpha_3 \ar@{. }[r] & \alpha_{n-1} \ar@{-} [r]  \ar@/^1pc/[rr]^{\gamma}& \alpha_{n} \ar@{-} [r] & \alpha_{n+1}.\\
 \beta_4 \ar@/_1pc/[rr]_{\beta} & \ar@{-} [l] \beta_3 & \beta_2 \ar@{-} [l] & \ar@{-} [l] \beta_1& \ar@{-} [l] \beta_0&&&&
}
$
Similarly, we construct two compression bodies $V_0 = S[\gamma]$ and $W_0=S[\beta]$.
Then, $d_{\mathcal{C}(S)}(V_0, W_0)=2+(n-1)=n+1$. 
\end{proof}

The remaining cases that $n=0$ and $n=1$ are obvious. One can take two separating meridians in $V$ and $W$ that are same or disjoint. The resulting compression bodies has distance 0 or 1. With the preceding propositions, we complete the proof of Theorem \ref{compression}.

\section{\textbf{Proof of Theorem \ref{link}}}
\label{section: link complement}

In this section, we utilize Theorem \ref{compression} to show the exact distance of the link complement with a genus $g \geq 3$ Heegaard splitting. Let us recall Minsky, Moriah and Schleimer's work on the high distance knot. 
\begin{Th}\emph{(Minsky-Moriah-Schleimer \cite{MMS} Theorem 3.1)}\label{High}
 For any pair of integers $g>1$ and $n>0$, there is knot $K\subset S^3$ and a genus $g$ splitting $S \subset E(K)$ having distance greater than n. 
\end{Th}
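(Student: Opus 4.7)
The plan is to follow the pseudo-Anosov twisting strategy. Fix the standard genus $g$ Heegaard splitting $S^3 = V \cup_\Sigma W$ and choose a knot $K_0 \subset V$ whose complement in $V$ is a compression body; then replace $K_0$ by its image under a high power of a carefully chosen self-homeomorphism of $V$ that is pseudo-Anosov on $\Sigma$.

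Concretely, take $K_0 \subset V$ to be, say, a pushed-in unknotted arc, so that $V' := V \setminus N(K_0)$ is a compression body with $\partial_- V' = \partial N(K_0)$ and $\partial_+ V' = \Sigma$. Then $E(K_0) = V' \cup_\Sigma W$ is already a genus $g$ Heegaard splitting, albeit of possibly small distance. Next, produce a homeomorphism $\phi : V \to V$ whose restriction to $\Sigma$ is pseudo-Anosov: Thurston's construction (Theorem \ref{Thurston}) applied to a filling pair of meridians of $V$ gives such a $\phi$ extending over the handlebody. For each integer $n \geq 1$ set $K_n := \phi^n(K_0) \subset V$. Since $\phi$ is a self-homeomorphism of $V$,
\[ V \setminus N(K_n) \;=\; \phi^n(V'), \]
so $E(K_n) = \phi^n(V') \cup_\Sigma W$ is a genus $g$ Heegaard splitting of the exterior whose distance is
\[ d_n \;:=\; d_{\mathcal{C}(\Sigma)}\bigl(\phi^n\mathcal{D}(V'),\;\mathcal{D}(W)\bigr). \]

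The heart of the argument is showing $d_n \to \infty$. A bare application of Theorem \ref{MM_pS} is not enough, because both $\mathcal{D}(V')$ and $\mathcal{D}(W)$ have infinite diameter in $\mathcal{C}(\Sigma)$. The key input is Theorem \ref{Bounded disk}, which tells us that the subsurface projection of each disk set to any subsurface cut off by a disk-busting curve has uniformly bounded diameter; in particular, each disk set is ``thin'' transverse to long geodesic segments. Combining this with the bounded geodesic image theorem (Theorem \ref{Bounded}) and the translation estimate (Theorem \ref{MM_pS}), any geodesic from $\phi^n\mathcal{D}(V')$ to $\mathcal{D}(W)$ is forced to fellow-travel the quasi-axis of $\phi$ for a distance growing linearly in $n$, provided the stable and unstable laminations of $\phi$ on $\Sigma$ lie outside the closures of $\mathcal{D}(V')$ and $\mathcal{D}(W)$ in $\mathcal{PMF}(\Sigma)$. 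This yields $d_n \geq c n$ for some $c > 0$, and choosing $n$ large gives the required knot $K := K_n$.

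The main obstacle is the generic-position requirement on $\phi$: one must arrange that its invariant laminations avoid both $\overline{\mathcal{D}(V')}$ and $\overline{\mathcal{D}(W)}$ in $\mathcal{PMF}(\Sigma)$. Since the space of Thurston pseudo-Anosovs arising from filling pairs of meridians of $V$ is abundant, while the two disk sets are nowhere dense in $\mathcal{PMF}(\Sigma)$ (a Masur-type genericity statement), such $\phi$ exist; the delicate step is selecting the filling pair so that this avoidance is effective enough to feed into the bounded geodesic image argument. Once this is arranged, the distance estimate above produces, for each prescribed $n > 0$, a knot $K \subset S^3$ with a genus $g$ Heegaard splitting of $E(K)$ of distance $> n$, as required.
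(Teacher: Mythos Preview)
Your overall framework matches the paper's sketch exactly: start from the standard splitting $S^3 = V \cup_\Sigma W$, take $K_0$ to be the core of one handle of $V$ so that $V' = V \setminus N(K_0)$ is a compression body, and twist by high powers of a pseudo-Anosov $\phi$ extending over $V$ to push $\mathcal{D}(V')$ away from $\mathcal{D}(W)$.

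The divergence is in how $\phi$ is produced and how $d_n \to \infty$ is certified. The paper (following \cite{MMS}) builds $\phi$ from a particular train track on $\Sigma$: one constructs a track $\tau$ carrying the relevant disk set, and uses nesting of iterated splittings of $\tau$ together with the Masur--Minsky distance estimates to force the distances to diverge. The train-track machinery is exactly what pins down the position of the invariant laminations of $\phi$ relative to both disk sets; it is not just a convenience.

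Your proposal replaces this by Thurston's construction on a filling pair of meridians of $V$ plus a genericity claim, and this is where the gap lies. You correctly identify that you need the stable and unstable laminations of $\phi$ to avoid both $\overline{\mathcal{D}(V')}$ and $\overline{\mathcal{D}(W)}$ in $\mathcal{PML}(\Sigma)$. But nowhere-denseness of these closures does not let you avoid them by a soft argument: the laminations of $T_\alpha T_\beta^{-1}$ for meridian pairs $(\alpha,\beta)$ of $V$ form only a countable set, and, being limits of $V$-meridians, they all lie in $\overline{\mathcal{D}(V)}$ to begin with. There is no Baire-category or measure-theoretic reason that this countable set should meet the complement of $\overline{\mathcal{D}(W)}$ (let alone of $\overline{\mathcal{D}(V')}$), and verifying this avoidance is precisely the nontrivial content of the MMS train-track argument. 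Without a concrete mechanism here, the fellow-traveling step you sketch (quasi-axis plus Theorem~\ref{Bounded} plus Theorem~\ref{Bounded disk}) has no input to work with, and the conclusion $d_n \to \infty$ is not established.
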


In a nutshell, one can take the standard genus $g$ Heegaard splitting $S^3=V\cup_S W$, and let $D\subset V$ be a disk that cuts $V$ into a solid torus $X$ and a handlebody $Y$ of genus $g-1$.  Let $K_0$ be the core of $X$, then the complement $\widetilde{V_0}=V-N(K_0)$ is a compression body. It follows that $E(K_0)=\widetilde{V_0} \cup_S W$. The strategy is using a particular train track to construct a pseudo-Anosov map $\Phi$ that can be extended over $V$. Denote $V_n = \Phi^n(\widetilde{V_0})$ and $K_n=\Phi^n(K_0)\subset V\subset S^3$, then $V_n\cup_S W$ is a genus $g$ Heegaard splitting of the knot exterior $E(K_n)=S^3 - N(K_n)$. Then, 
$$d_{\mathcal{C}(S)}(\mathcal{D}_{V_n}, \mathcal{D}_W)\rightarrow \infty,$$
as $n\rightarrow \infty$. 

The iteration of $\Phi$ is a pseudo-Ansov map that can be extended over handlbody $V$. For any number $M$, there is a pseudo-Anosov map $\Phi$ such that $\Phi(\widetilde{V_0})\cup_S W$ is a Heegaard splitting of some knot exterior $E(K)$ with distance $d_{\mathcal{C}(S)}(\Phi(\widetilde{V_0}), W)> M.$

Our main goal in this section is to prove Theorem \ref{link}. 

\begin{proof} [Proof of Theorem \ref{link}]

For $n\geq 2$, using Theorem \ref{compression}, one can find the disk $D_1$ in $V$ and disk $D_2$ in $W$ such that the meridians $\partial D_1$ and $\partial D_2$ realize the distance $n$. By the construction, we can let $V_0 = S[\partial D_1]$ and $W_0 = S[\partial D_2]$, then the Heegaard distance $d_{\mathcal{C}(S)}(V_0,W_0) = n$. The two compression bodies are illustrated in Fig.~\ref{compressions}.

\begin{figure}[ht]
\vspace{-0.2cm}
\scalebox{.20}{\includegraphics[origin=c]{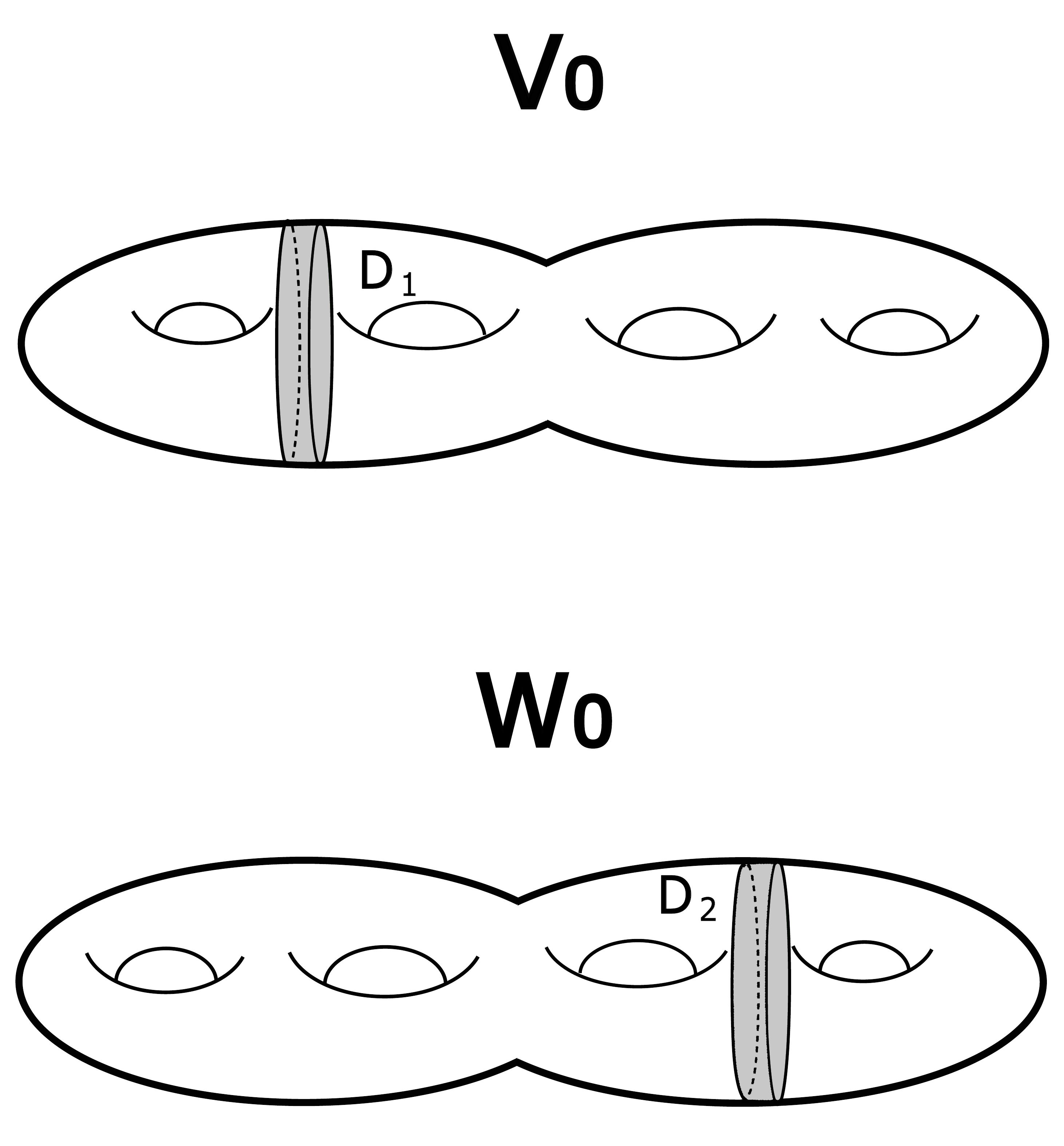}}
\vspace{0cm}
\caption{Compression body $V_0$ and $W_0$ are obtained by attaching a single 2-handle to $S \times [0,1]$ along the meridians $\partial D_1$ and $\partial D_2$.}
\label{compressions}
\end{figure}

Note that the figures are only for illustration, as $\partial D_1$ and $\partial D_2$ are supposed to be intersecting. In $V_0$, $W_0$, the meridians $\partial D_1$ and $\partial D_2$ divide the surface $S$ into two subsurfaces, one of which is a one-holed torus. Denote them as $S_1$, $S_2$ and $S_3$, $S_4$, and we assume $S_1$ and $S_3$ are one-holed tori, as shown in the \Cref{V0_glue,W0_glue}. In addition, we denote the corresponding negative boundaries as $\{F_1, F_2\}$ and $\{F_3, F_4\}$. As the genus $g(S) \geq 3$, then the genus $g(F_2)\geq 2$ and $g(F_4)\geq 2$. The proof is analogous to that of Proposition 3.1 in \cite{QZG} and Proposition 5.1 in \cite{IJK}.

The disk $D_1$ cuts $V_0$ into $F_1\times [0,1]$ and $F_2\times [0,1]$, and $D_2$ cuts $W_0$ into $F_3\times [0,1]$ and $F_4\times [0,1]$. In the compression body $V_0$, we identify $F_i = F_i \times \{1\}$, $S_i \cup D_1 = F_i \times \{0\}$ for $i=1,2$. Let $f_{F_i}: S_i\cup D_1 \rightarrow F_i$ be the homeomorphism such that $f_{F_i}(x \times \{0\})=x \times \{1\}$. The homeomorphism $f_{F_i}$ induces an isomorphism on the curve graphs, that is, 
$$d_{\mathcal{C}(F_i)}(f_{F_i}(\alpha), f_{F_i}(\beta)) = d_{\mathcal{C}(S_i\cup D_1)}(\alpha, \beta),$$
for any two essential simple closed curves $\alpha$ and $\beta$ on $S_i\cup D_1$. 

Let $l:S_i \rightarrow S_i\cup D_1$ be the inclusion map, then
$$d_{\mathcal{C}(S_i\cup D_1)}(l(\alpha), l(\beta)) \leq d_{\mathcal{C}(S_i)}(\alpha, \beta),$$
for any two essential simple closed curves $\alpha$ and $\beta$ on $S_i$. Define 
$$P_{F_i}=f_{F_i}\circ l \circ \pi_{S_i}: S\longrightarrow F_i$$
to be a map either between surfaces or the induced map between curve graphs, where $\pi_{S_i}$ is the subsurface projection.
Since $d_{\mathcal{C}(S)}(\partial D_1, \partial D_2) = n\geq 2$, then 
$$\diam_{\mathcal{C}(F_i)}(P_{F_i}(\partial D_2))\leq \diam_{\mathcal{C}(S_i)}(\pi_{S_i}(\partial D_2))\leq 2.$$
As a 3-manifold embedded in $S^3$, $V_0\cup W_0$ has two non-torus boundary components $F_2$ and $F_4$. The component of the complement of $V_0\cup W_0$ in $S^3$ with the boundary $F_2$ is a handlebody $X_{g-1}$ of genus $g-1$. It follows that $V_0\cup W_0\subset S^3 - X_{g-1}$, where $S^3 - X_{g-1}$ is a handlebody of genus $g-1$. It induces an inclusion of the disk graph, $\mathcal{D}(V_0\cup_S W_0, F_2)\subset \mathcal{D}(S^3 - X_{g-1}, F_2)$.

\begin{figure}[ht]
\vspace{-0.2cm}
\scalebox{.20}{\includegraphics[origin=c]{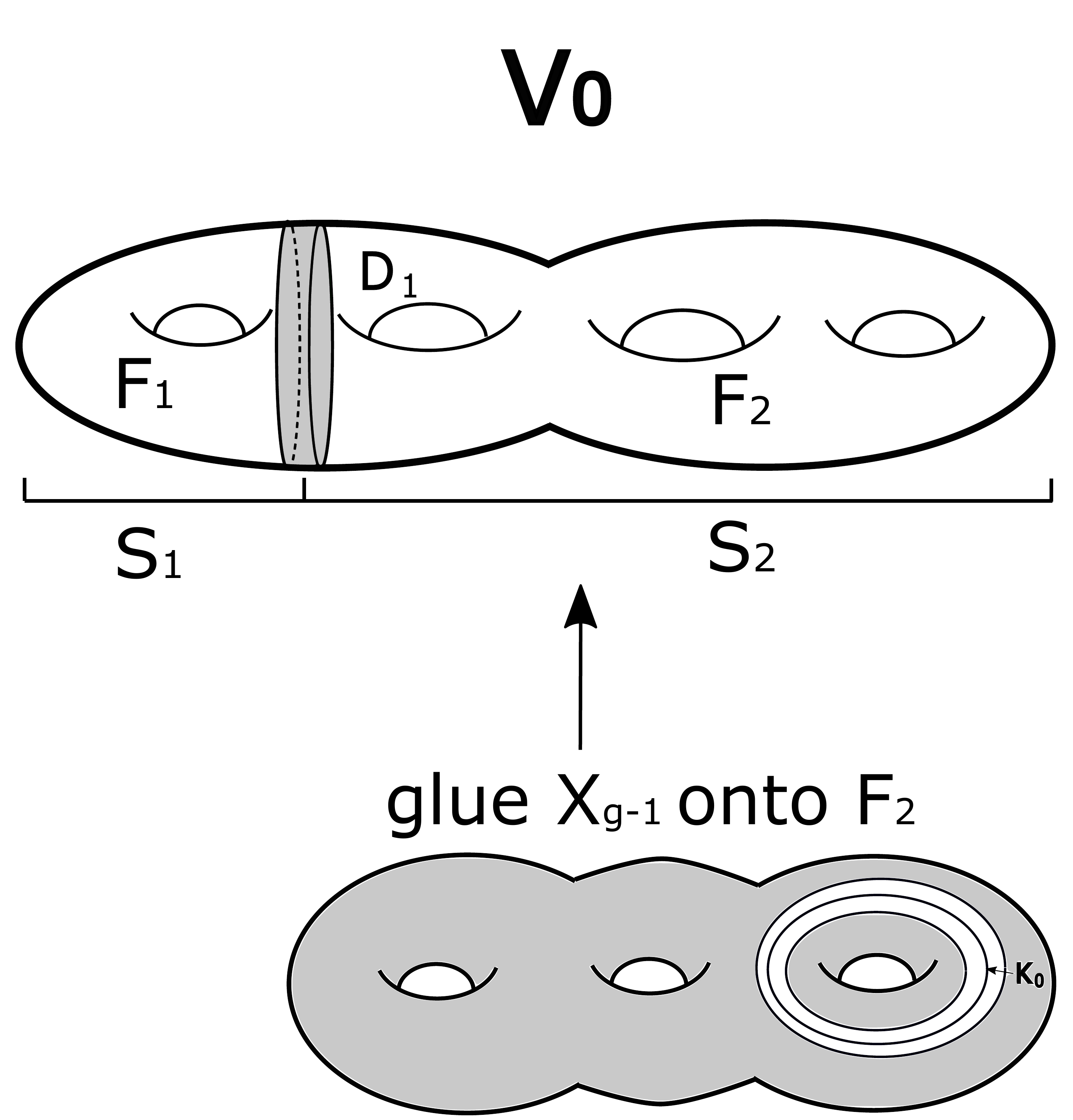}}
\vspace{0cm}
\caption{The meridian $\partial D_1$ cuts the positive (outer) boundary $S$ of the compression body $V_0$ into two subsurface $S_1$ and $S_2$. The gluing map is the standard identification of the boundary of handlebody $X_{g-1}$ onto $F_2$.}
\label{V0_glue}
\end{figure}

Inside the handlebody $X_{g-1}$, we take a core as a trivial knot $K_0$. Let $\widetilde{V_0}=X_{g-1} - K_0$. Note that $P_{F_2}(\partial D_2)$ has diameter 2 in the curve graph $\mathcal{C}(F_2)$, then 
$$d_{\mathcal{C}(F_2)}(P_{F_2}(\partial D_2), \mathcal{D}(S^3 - X_{g-1}, F_2)) \leq N_1 $$
for some constant $N_1$. Let $M$ be  the upper bound in the Bounded Geodedic Image Theorem \ref{Bounded}. By Theorem \ref{High}, there exists a pseudo-Anosov map $\Phi: F_2 \longrightarrow F_2$  such that $\Phi$ can be extended over the handlebody $X_{g-1}$, and 
$$d_{\mathcal{C}(F_2)}(\mathcal{D}(\Phi(\widetilde{V_0}), F_2), \mathcal{D}(S^3 - X_{g-1}, F_2)) > M + N_1 + 2.$$
By the triangle inequality, 
$$d_{\mathcal{C}(F_2)}(\mathcal{D}(\Phi(\widetilde{V_0}), F_2), P_{F_2}(\partial D_2)) > M.$$

\textbf{Claim 1.} The distance $d_{\mathcal{C}(S)}(V_{F_2}, W_0)=n$, where $V_{F_2} = V_0 \cup_{F_2}\Phi(\widetilde{V_0})$. 
\begin{proof}
Suppose not, then the distance $d_{\mathcal{C}(S)}(V_{F_2}, W_0)=k<n$. Since $W_0$ contains a unique disk $D_2$, then there is an essential disk $D\neq D_1$ in $V_{F_2}$ such that $d_{\mathcal{C}(S)}(\partial D,\partial D_2)=k<n$. It means there is a geodesic $\{\alpha_0=\partial D, \alpha_1, \cdots, \alpha_k = \partial D_2\}$ in curve graph $\mathcal{C}(S)$. Then for each $\alpha_i$, we have $\alpha_i \cap \partial D_1 \neq \varnothing$, for any $1\leq i \leq k$. Suppose there is some $\alpha_i$ such that $\alpha_i \cap \partial D_1=\varnothing$.  It follows that 

\begin{eqnarray*}
 n &=& d_{\mathcal{C}(S)}(\partial D_1, \partial D_2)\\
 &\leq& d_{\mathcal{C}(S)}(\partial D_1, \alpha_i) + d_{\mathcal{C}(S)}(\alpha_i, \partial D_2)\\
 &\leq& 1+(k-i)\leq k<n,
\end{eqnarray*}
which is a contradiction. Moreover, $\alpha_0 = \partial D$ is not in $S_1$. Otherwise, $D\subset F_1 \times [0,1]$, then $D$ is inessential. Hence, we show that $\alpha_i$ cuts  $S_2$ for all $0 \leq i \leq k$.

By the Bounded Geodesic Image Theorem \ref{Bounded}, we have
$d_{\mathcal{C}(S_2)}(\pi_{S_2}(\partial D), \pi_{S_2}(\partial D_2)) \leq M$. It implies that 
$d_{\mathcal{C}(S_2 \cup D_1)}(\pi_{S_2}(\partial D), \pi_{S_2}(\partial D_2))\leq M$ and $d_{\mathcal{C}(F_2)}(P_{F_2}(\partial D), P_{F_2}(\partial D_2))\leq M$. Assume that $D$ and $D_1$ intersect minimally. By the innermost disk argument, we can assume that $D\cap D_1$ has no loop components.

\textbf{Case i:} $|D\cap D_1|=0$. Since $D$ is not isotopic to $D_1$, then $P_{F_2}(\partial D)$ bounds an essential disk in the $\Phi(\widetilde{V_0})$, then 
$$d_{\mathcal{C}(F_2)}(P_{F_2}(\partial D), P_{F_2}(\partial D_2))\leq M,$$ which means that
$$d_{\mathcal{C}(F_2)}(\mathcal{D}(\Phi(\widetilde{V_0}), F_2), P_{F_2}(\partial D_2))\leq M.$$
It is contradictory to the choice of $\Phi(\widetilde{V_0})$.

\textbf{Case ii:} $|D\cap D_1| \neq 0$. Let $a$ be an outermost arc of $D\cap D_1$ on the disk $D$. It implies that $a$ and a subarc $b\subset \partial D_1$, bounds another disk $D'$ such that $D'\cap D_1 = a$. As we know that $D_1$ cuts $V_{F_2}$ into $\Phi(\widetilde{V_0})$ and $F_1 \times [0,1]$. Note that $D'\subset \Phi(\widetilde{V_0})$, then an essential simple closed curve in $P_{F_2}(\partial D)$ bounds an essential disk in $\Phi(\widetilde{V_0})$. By the \textbf{Case i}, it is impossible. 
\end{proof}

Next, let's look at the compression body $W_0$, which contains the unique essential disk $D_2$. The $D_2$ cuts $W_0$ into two submanifolds $F_3 \times [0,1]$ and $F_4 \times [0,1]$. $\partial D_2$ cuts $S$ into $S_3$ and $S_4$, where $S_3$ is a one-holed torus. Identify $F_i = F_i \times \{1\}$ and $S_i \cup D_2=F_i\times \{0\}$, for $i=3,4$. Similarly, let $f_{F_i}: S_i\cup D_2 \rightarrow F_i$ be the homeomorphism such that $f_{F_i}(x\times\{0\})=x\times\{1\}$. For any two essential simple closed curves $\alpha$ and $\beta$ on $S_i \cup D_2$, 
$$d_{\mathcal{C}(F_i)}(f_{F_i}(\alpha),f_{F_i}(\beta))=d_{\mathcal{C}(S_i\cup D_2)}(\alpha, \beta).$$
The isomorphism between the curve graphs is also denoted as $f_{F_i}$.  Let $l: S_i \rightarrow S_i\cup D_2$ be the inclusion map, then for any two essential simple closed curves $\alpha$ and $\beta$ on $S_i$,
$$d_{\mathcal{C}(S_i\cup D_2)}(l(\alpha),l(\beta)) \leq d_{S_i}(\alpha,\beta).$$
Let $\pi_{S_i}$ be the subsurface projection, we define $P_{F_i}=f_{F_i}\circ l \circ \pi_{S_i}: S \rightarrow F_i.$

\begin{figure}[ht]
\vspace{-0.2cm}
\scalebox{.20}{\includegraphics[origin=c]{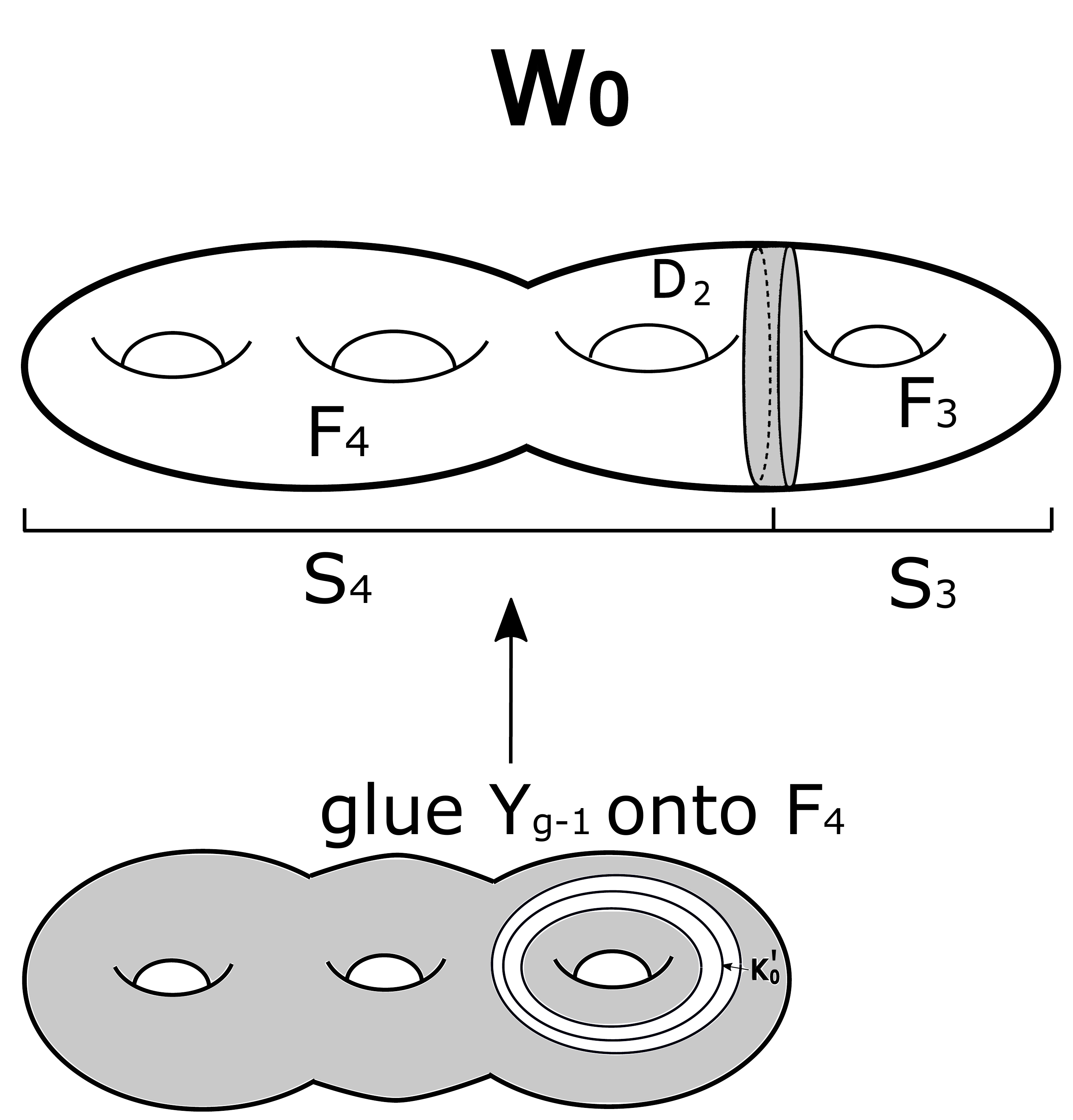}}
\vspace{0cm}
\caption{The meridian $\partial D_2$ cuts the positive (outer) boundary $S$ into two subsurface $S_3$ and $S_4$. The gluing map is the standard identification of the boundary of handlebody $Y_{g-1}$ onto $F_4$.}
\label{W0_glue}
\end{figure}

The Heegaard surface $S$ cuts $S^3$ into two handlebodies $V$ and $W$. Suppose that $V_{F_2} \subset V$, then $V_{F_2}$ has two torus boundary components. One is the $F_1$, and the other comes from the boundary of the regular neighborhood of the knot $\Phi(K_0)$. Both torus boundary components are incompressible in $V_{F_2}$.
Since $d_{\mathcal{C}(S)}(V_{F_2}, W_0) = n \geq 2$, and $W_0$ has the unique disk $D_2$, then the meridian $\partial D_2 = \partial S_4$ is disk-busting in $S$. Note that $V_{F_2}$ is not an $I$-bundle over some compact surface. Using Theorem \ref{Bounded disk}, we have $\diam_{\mathcal{C}(S_4)}(\pi_{S_4}(\mathcal{D}(V_{F_2}, S))) \leq 12$. It follows that 
 $$\diam_{\mathcal{C}(F_4)}(P_{F_4}(\mathcal{D}(V_{F_2}, S))) \leq \diam_{\mathcal{C}(S_4)}(\pi_{S_4}(\mathcal{D}(V_{F_2}, S)))\leq 12.$$
 The surface $F_4$ bounds two handlebodies in $S^3$. Denote the one that does not contain the 3-manifold $V_{F_2}\cup_{S} W_0$ as $Y_{g-1}$, then  $V_{F_2}\cup_{S} W_0$ lies in $S^3 - Y_{g-1}$. Since $P_{F_4}(\mathcal{D}(V_{F_2}, S))$ is bounded, there exists a constant $N_2$ such that
 $$d_{\mathcal{C}(F_4)}(P_{F_4}(\mathcal{D}(V_{F_2}, S)), \mathcal{D}(S^3 - Y_{g-1}, F_4)) \leq N_2.$$

Let a trivial knot $K'_0$ be a core of the handlebody $Y_{g-1}$ and $\widetilde{W_0}=Y_{g-1}-K'_0$. Once again, by Theorem \ref{High}, for the constant $M + N_2 + 12$, there is a pseudo-Anosov map $\Psi: F_{4} \longrightarrow F_{4}$ that can be extended over the handlebody $Y_{g-1}$, and it satisfies   
$$d_{\mathcal{C}(F_4)}(\mathcal{D}(\Psi(\widetilde{W_0}), F_4), \mathcal{D}(S^3 - Y_{g-1}, F_4)) > M + N_2 + 12.$$
Using the triangle inequality, we have 
$$d_{\mathcal{C}(F_4)}(\mathcal{D}(\Psi(\widetilde{W_0}), F_4), P_{F_4}(\mathcal{D}(V_{F_2}, S)) > M.$$

\textbf{Claim 2.} The distance $d_{\mathcal{C}(S)}(V_{F_2}, W_{F_4})=n$, where $W_{F_4}=W_0\cup_{F_4} \Psi(\widetilde{W_0})$. 
\begin{proof}
Suppose not, then the distance $d_{\mathcal{C}(S)}(V_{F_2}, W_{F_4})=k<n$. It means there is a geodesic $\{\alpha_0=\partial E, \alpha_1, \cdots, \alpha_k = \partial E'\}$ in curve graph $\mathcal{C}(S)$, where $E$ is an essential disk in $V_{F_2}$ and $E'$ is an essential disk in $W_{F_4}$. Note that $D_2$ is an essential disk in $W_{F_4}$. Then for each $\alpha_i$, we have $\alpha_i \cap \partial D_2 \neq \varnothing$, for any $0 \leq i \leq k-1$. Suppose there is some $\alpha_i$ such that $\alpha_i \cap \partial D_2=\varnothing$. 
It follows that 
\begin{eqnarray*}
 n&=&d_{\mathcal{C}(S)}(\partial D_1, \partial D_2)\\
 &\leq& d_{\mathcal{C}(S)}(\partial E, \partial D_2)\\
 &\leq& d_{\mathcal{C}(S)}(\partial E, \alpha_i)+d_{\mathcal{C}(S)}(\alpha_i, \partial D_2)\\
 &\leq& i + 1\leq k < n,
\end{eqnarray*}
which is a contradiction. Moreover, $\partial E'$ is not in $S_3$. Otherwise, $E'\subset F_3 \times [0,1]$, then $E'$ is inessential. Hence, we show that all $\alpha_i$ cut $S_4$ for $0\leq i \leq k$. By the Bounded Geodesic Image Theorem \ref{Bounded}, we have
$d_{\mathcal{C}(S_4)}(\pi_{S_4}(\partial E), \pi_{S_4}(\partial E'))\leq M$. It yields that
$d_{\mathcal{C}(S_4\cup D'_2)}(\pi_{S_4}(\partial E), \pi_{S_4}(\partial E'))\leq M$ and $d_{\mathcal{C}(F_4)}(P_{F_4}(\partial E), P_{F_4}(\partial E'))\leq M$.  Assume that $E'$ and $D_2$ intersect minimally. By the innermost disk argument, we can assume that $E' \cap D_2$ has no loop components.

\textbf{Case i:} $|E' \cap D_2|=0$. Since $E'$ is not isotopic to $D_2$ and $\partial E'$ is not in $S_3$, then $P_{F_4}(\partial E') = \partial E' \in \mathcal{D}(\Psi(\widetilde{W_0}), F_4)$.  We know that $\partial E \in \mathcal{D}(V_{F_2}, S)$, then the inequality 
$$d_{\mathcal{C}(F_4)}(\mathcal{D}(\Psi(\widetilde{W_0}), F_4), P_{F_4}(\mathcal{D}(V_{F_2}, S))\leq M$$ follows from 
$d_{\mathcal{C}(F_4)}(P_{F_4}(\partial E), P_{F_4}(\partial E'))\leq M$.
The inequality contradicts to the choice of $\Psi(\widetilde{W_0})$.

\textbf{Case ii:} $|E'\cap D_2|\neq 0$. Let $a$ be an outermost arc of $E'\cap D_2$ on the disk $E'$. It implies that $a$ and a subarc $b\subset \partial D_2$, bounds another disk $D''$ such that $D''\cap D_2=a$. As we know that $D_2$ cuts $W_{F_4}$ into $\Psi(\widetilde{W_0})$ and $F_3 \times [0,1]$. Note that $D'' \subset \Psi(\widetilde{W_0})$, then an essential simple closed curve in $P_{F_4}(\partial E')$ bounds an essential disk in $\Psi(\widetilde{W_0})$. Again, it is impossible by the  \textbf{Case i}. 
\end{proof}
Hence, the distance $d_{\mathcal{C}(S)}(V_{F_2}, W_{F_4})=n$, where $V_{F_2} \cup_S W_{F_4} = S^3 - N(K)$, and $K$ is a link of four components. This completes the proof of Theorem \ref{link}.
\end{proof}

\begin{Rem}
More link components can be added in the $\Psi(\widetilde{W_0})$, and it does not change the distance. 
\end{Rem}


\end{document}